\definecolor{purple}{rgb}{.9,0,.9}
\def\XXint#1#2#3{{\setbox0=\hbox{$#1{#2#3}{\int}$}
    \vcenter{\hbox{$#2#3$}}\kern-.5\wd0}}
\def\Nat{{\, \hbox{N \kern-1.25em I}\ \;}}
\def\Real{{\, \hbox{R \kern-1.25em I}\ \;}}
\def\Pos{{\, \hbox{P \kern-1.15em I}\ \;}}
\def\Int{{\, \hbox{\tenss Z \kern-1.1em Z} \,}}
\def\rfa{\qquad {\rm for \ all}\ }
\def\rfaa{\qquad {\rm for \ almost \ all}\ }
\font\teneuf=eufm7 at 11pt
\font\seveneuf=eufm7
\font\fiveeuf=eufm5
\def\cA{{\cal A}}\def\cC{{\cal C}} \def\cI{{\cal I}}\def\cK{{\cal K}}\def\cM{{\cal M}}\def\cS{{\cal S}}
\def\cW{{\cal W}}\def\cY{{\cal Y}}
\def\ba{{\bf a}}
\def\bg{{\bf g}}
\def\bn{{\bf n}}
\def\br{{\bf r}}\def\bt{{\bf t}}
\def\bu{{\bf u}}\def\bv{{\bf v}}\def\bx{{\bf x}}
\def\by{{\bf y}}\def\bz{{\bf z}}
\def\bL{{\bf L}}
\def\b0{{\bf 0}}
\newtheorem{theorem}{Theorem}[section]
\newtheorem{lemma}[theorem]{Lemma}
\newtheorem{proposition}[theorem]{Proposition}
\def\tr{\text{tr}}
\def\beqn{\begin{equation}}
\def\eeqn{\end{equation}}
\def\Nat{\mathbb{N}}
\def\Real{\mathbb{R}}
\def\Pos{\mathbb{P}}
\numberwithin{equation}{section}
\def\Int{{\, \hbox{\tenss Z \kern-1.1em Z} \,}}
\def\bnu{{\boldsymbol \nu}}
\newcommand{\half}{\frac{1}{2}}
\newcommand{\abs}[1]{|#1\rvert}
\newcommand{\norm}[1]{\|#1 \|}
\newfont{\ssmsam}{msam5 scaled 750}
\newcommand{\dl}{\,\text{dl}}
\newcommand{\da}{\,\text{da}}
\newcommand{\trans}{\mskip-2mu\scriptscriptstyle\top\mskip-2mu}
\newcommand{\captionfonts}{\footnotesize}
\long\def\@makecaption#1#2{%
 \vskip\abovecaptionskip
 \sbox\@tempboxa{{\captionfonts #1: #2}}%
 \ifdim \wd\@tempboxa >\hsize
  {\captionfonts #1: #2\par}
 \else
  \hbox to\hsize{\hfil\box\@tempboxa\hfil}%
 \fi
 \vskip\belowcaptionskip}
\title{Stable and unstable helices:\\ Soap films in cylindrical tubes}
\author{Brian Seguin \& Eliot Fried}
\begin{document}
\date{}
\maketitle

%\tableofcontents

\vspace{.2in}

\begin{abstract}
\noindent Cox \& Jones recently devised and studied an interesting variant of the classical Plateau problem, a variant in which a helical soap film is confined to a cylindrical tube with circular cross-section. Through experiments, numerics, and some analysis, they found that the length and (inner) radius of the tube strongly influence the equilibrium shape of the confined soap film. In this paper, an area minimization problem associated with determining the shape of the film is formulated and analyzed to determine which surfaces are local minima. The connection between a functional inequality and the associated eigenvalue problem plays an important role in the analysis. For helical films, a more detailed analysis is carried out and stability conditions consistent with the experimental and numerical results of Cox \& Jones are obtained.   \end{abstract}

\section{Introduction}

In 1873, Plateau \cite{JP} published a classical work detailing  experiments involving soap films spanning closed wire loops. He also considered soap films spanning multiple disjoint closed loops and found that, depending on the placement of the loops, there may or may not exist a spanning soap film. One particular example discussed by Plateau involves two circular loops of equal radii. If these loops are coaxial and are sufficiently close together, then there is a spanning soap film in the shape of a catenoid. Increasing the distance between the loops causes the neck of the catenoid to shrink. This continues until the ratio of the distance between the loops and their radius reaches a critical value, at which point the soap film becomes unstable and collapses to two disconnected discs corresponding to the Goldschmidt \cite{G} solution.  Other instabilities involving soap films are described by Weaire et al.~\cite{WVTF}.

Recently, Cox \& Jones \cite{CJ} considered the stability of a soap film confined to a tube. Their setup involves a circular cylindrical tube of (inner) radius $R$ and length $L$, with a rigid wire spanning the diameter at each of its ends. Cox \& Jones considered a soap film whose boundary consisted of the two wires and two curves on the inner wall of the tube. Initially, the case in which the two wires are parallel was considered. In this case, when $L$ is small relative to $R$, the soap film forms a flat surface. However, Cox \& Jones found that increasing $L/R$ above a certain threshold destabilizes the soap film. They also performed this experiment after first rotating one wire relative to the other and found that the ratio $L/R$ at which the instability occurs decreases as the relative rotation of the wires increased. Their results suggest that there might be a maximal angle of rotation above which a helical soap film is unstable. Cox \& Jones obtained their results on the basis of experiments and numerical simulations using the program Surface Evolver. They also considered setups involving two or more crossing wires at the ends of the tube, but here we are only interested in the most simple case described above.

Since the days of Euler, it has been known that the shape of a spanning soap film can be determined by considering a minimization problem. In this problem, the wire loop is modeled as a simple closed curve and the soap film as a surface surface. The soap film spanning the wire is represented by the surface spanning the curve whose (Helmholtz) free-energy is minimal---in an appropriate local sense---among all surfaces that span the curve. Under the assumption of constant surface tension---which is reasonable in most situations---this is equivalent to minimizing the area of the spanning surface. The problem of proving the existence of an area minimizing surface with a prescribed boundary is referred to as Plateau's problem. As mentioned earlier, when several closed loops are considered, there need not exist a spanning soap film. That said, Douglas \cite{D}, Rad\'o \cite{R}, and Courant \cite{C} established sufficient conditions for the existence of a spanning film.

Here, to confirm the results of Cox \& Jones, a linear stability analysis is conducted. For parallel wires, the analysis confirms the critical value of $L/R$ at which the flat soap film becomes unstable obtained previously by Cox \& Jones. The analysis also suggests that the critical value of the ratio $L/R$ at which instability occurs decreases on increasing the angle between the wires. Finally, it is shown that there is a maximum rotation above which there exists no stable soap film in the shape of a helicoid.

In Section 2, a variational statement of the problem is provided and essential notation is introduced.  Section 3 consists of two parts.  In the first part, conditions necessary and sufficient for the first variation of the underlying free-energy functional to vanish are obtained. If those conditions hold for a certain surface, that surface is referred to as a critical point of the free-energy functional. In the second part, a condition that ensures the stability of a critical point is obtained. This condition can be expressed as a functional inequality or as a condition on the eigenvalues of a certain partial-differential eigenvalue problem. In Section 4, the stability of a flat surface spanning parallel wires is analyzed. This analysis is generalized in Section 5 to include the case where the wires need not be parallel.  In Section 6, our results and their relation to those of Cox \& Jones \cite{CJ} are discussed.

\section{Formulation}\label{sectsetup}

Consider a tube of length $L$ with an annular cross-section consisting of two concentric circles, the innermost of which has radius $R$. With a rescaling, it suffices to consider a tube with innermost radius $1$ and dimensionless length 
\beqn\label{dimpar}
\rho:=\frac{L}{R}.
\eeqn
Suppose that at each end of the tube there is a wire the centerline of which is a diameter for the innermost cross-section. Assume that both the tube and the wires are rigid. 

The innermost cross-section of the tube is modeled as a cylindrical tube $\cC$ of length $\rho$ and radius $1$ and the wires are modeled as line segments $\cW_1$ and $\cW_2$ of length $2$ (Figure 1). Denote the union of the wires by $\cW:=\cW_1\cup\cW_2$. Choose a unit vector $\bz$ parallel to the axis of $\cC$ directed from $\cW_1$ to $\cW_2$ and denote by $\br$ the unit normal on $\cC$ pointing away from its axis. Choose an $(x,y,z)$ Cartesian coordinate system with origin at the intersection of $\cW_1$ and the axis of the cylinder and with $\cW_2$ intersecting the axis of the cylinder at the point $(0,0,\rho)$. Furthermore, orient the coordinate system so that the $y$-axis is parallel to $\cW_1$. The angle between $\cW_2$ and the $y$-axis measured in the counter-clockwise direction is denoted by $\theta_0$ and satisfies $0\leq\theta_0< \pi$. 

\begin{figure*}[t]\label{FigNotation}
\begin{center}
\begin{picture}(450,250)
\put(25,10)  {\includegraphics[width=2in, height=3in]{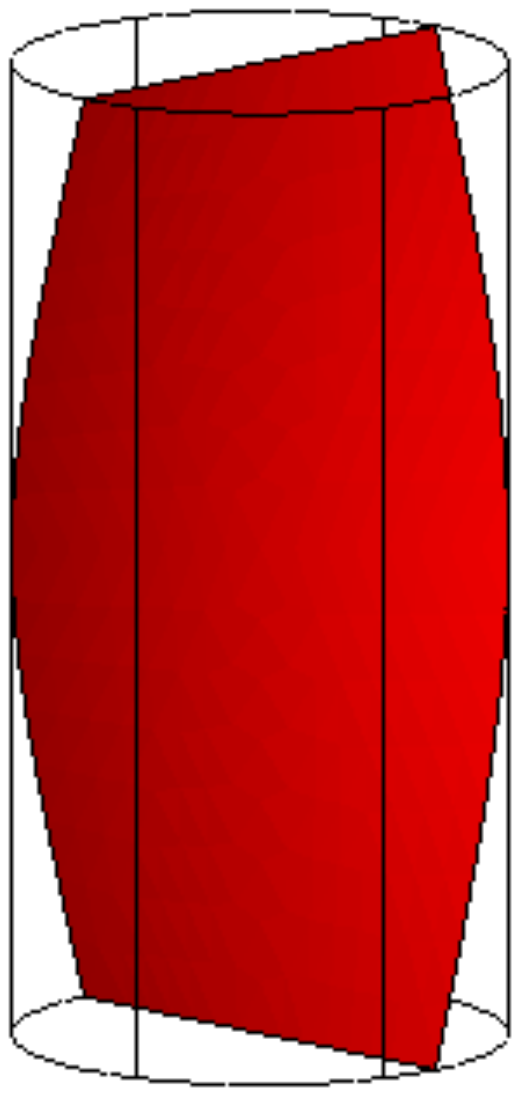}}
\put(245,10){\includegraphics[width=2in, height=3in]{RSVnoM2.pdf}}
\put(90,-20){(a)}
\put(310,-20){(b)}
\put(90,217){$\cW_2$}
\put(90,14){$\cW_1$}
\put(15,115){$\cY_1$}
\put(170,115){$\cY_2$}
\put(400,115){$\rho$}
\put(315,-6){$2$}
\put(387,5){\vector(-1,0){137}}
\put(250,5){\vector(1,0){137}}
\put(395,21){\vector(0,1){192}}
\put(395,213){\vector(0,-1){192}}
\thicklines
\put(317,220){\vector(0,1){25}}
\put(321,235){$\bz$}
\put(306,24){\vector(-3,-2){25}}
\put(302,16){$\bn$}

\put(315,216){\rotatebox[origin=c]{-5}{\vector(-4,-1){25}}}
\put(300,216){$\bt$}
\put(260,170){\vector(-4,-1){25}}
\put(254,175){$\bnu$}
\put(251.5,115){\vector(-3,0){25}}
\put(235,105){$\br$}
\multiput(48,220)(5,-1){17}{\rotatebox[origin=c]{-14}{\line(1,0){3}}}
\put(36,211){$\theta_0$}
\qbezier(56,218.5)(33,211)(59,208)
\end{picture}
\end{center}
\vspace{.2in}
\caption{A surface $\cS$ in $\cA$ confined to the cylinder $\cC$. (a) A depiction of $\cS$ showing that the boundary of $\cS$ consists of two line segments, $\cW_1$ and $\cW_2$, and two curves, $\cY_1$ and $\cY_2$, lying on $\cC$. The line segments $\cW_1$ and $\cW_2$ are diameters of $\cC$ and, thus, are of length 2 and are rotated relative to one another by the angle $\theta_0$. (b) A depiction of $\cS$ showing the unit normal $\bn$ on $\cS$ and the unit tangent $\bt$ and unit binormal $\bnu$ on $\cS$. Here, $\bz$ in the unit vector parallel to the axis of $\cC$ directed from $\cW_1$ to $\cW_2$ and $\br$ is the unit vector on $\cC$ oriented to point away from $\bz$. %The dimensions of the cylinder are also displayed.
}
\end{figure*}

Consider the flat surface 
\beqn
\cS_0:=\{(x,y,z)\in\Real^3\ |\ x=0,\ -1\leq y\leq 1,\ 0\leq z\leq \rho\}.
\eeqn
Put $\cY_0:=\partial\cS_0\setminus\cW$.  Let $\cA$ denote the collection of all surfaces $\cS$ such that
\begin{enumerate}

\item There is an injective mapping $f\in C^2(\cS_0,\Real^3)$ such that $f(\cS_0)=\cS$.

\item The boundary $\partial\cS$ of $\cS$ consists of $\cW$ and two curves that lie on $\cC$---that is, 
\beqn\label{scond}
\cW\subset\partial\cS\qquad{\rm and}\qquad \cY:=\partial\cS\setminus\cW\subset\cC.
\eeqn

\end{enumerate}
Since $f$ is differentiable, it follows from items 1 and 2 that
\beqn\label{fbdy}
f(\cW)=\cW\qquad\text{and}\qquad f(\cY_0)=\cY.
\eeqn

Next, consider a surface $\cS\in\cA$. Because of the assumed regularity, $\cS$ is orientable and, hence, there is unit-vector field $\bn$ of class $C^1$ defined on $\cS$. Denote the tangent and binormal of $\partial\cS$ by $\bt$ and $\bnu$, respectively, and assume that the orientation of $\cS$ is chosen so that, on $\cW_1$, $\bt$ points in the positive $y$-direction (Figure~1).

Consider a $C^1$ surface or curve $\cM$ and a scalar- or vector-valued field $\phi$ defined on $\cM$. The manifold $\cM$ may be the cylinder $\cC$, an element of $\cA$, or part of the boundary of an element of $\cA$. The $\cM$ gradient $\nabla_\cM\phi$ of $\phi$ is defined by 
\beqn
\nabla_\cM\phi(\bx)=\nabla\bar\phi(\bx)\rfa \bx\in\cM,
\eeqn
where $\bar\phi$ is an extension of $\phi$ to a neighborhood of $\cM$ that is constant in the direction perpendicular to $\cM$. If $\phi$ is vector-valued, the $\cM$ divergence $\text{div}_\cM\phi$ of $\phi$ is defined by
\beqn
\text{div}_\cM\phi:=\text{tr}(\nabla_\cM\phi).
\eeqn
The surface Laplacian $\Delta_\cS$ is given by the surface divergence of the surface gradient.  Given a vector field $\bu$ tangent to $\cM$, define the directional derivative $\nabla_\bu\phi$ of $\phi$ in the direction $\bu$ by
\beqn\label{dirder}
\nabla_\bu\phi:=(\nabla_\cM\phi)\bu.
\eeqn
If $\phi$ is scalar-valued, the natural inner-product of $\Real^3$ can be used to write the right-hand side of \eqref{dirder} as
\beqn
(\nabla_\cM\phi)\bu=\nabla_\cM\phi\cdot\bu.
\eeqn

The curvature tensor $\bL$ of a surface $\cS$ in $\cA$ is defined by 
\beqn\label{defL}
\bL:=-\nabla_\cS\bn
\eeqn
and can be shown to be symmetric. The mean and Gaussian curvatures $H$ and $K$ of $\cS$ are given by
\beqn\label{curvatures}
H:=\half \tr \bL\qquad{\rm and}\qquad K:=\half[(\tr \bL )^2-\tr(\bL^2)],
\eeqn
respectively. Since $\cS$ is parameterized by a mapping of class $C^2$, the curvature tensor $\bL$, as well as the mean and Gaussian curvatures $H$ and $K$, are continuous, and hence bounded, on $\cS$.

The functional $E:\cA\longrightarrow\Real$ defined as
\beqn\label{energy}
E(\cS):=\int_\cS\sigma \,\text{da}\rfa \cS\in\cA,
\eeqn
where $\sigma>0$ is the constant, dimensionless surface-tension of the soap film, gives the (Helmholtz) free-energy associated with each surface in $\cA$. The extent to which the local minimizers of $E$ depend on the parameters $\rho$ and $\theta_0$ is considered in the following sections.

%%%%%%%%%%%%%%%%%%

\section{General information about the critical points and their stability}

To analyze the possible critical points of $E$ and their stability, the first and second variations of $E$ are required. For the remainder of this section, consider a fixed surface $\cS\in\cA$ and a one-parameter family of surfaces in $\cA$ generated by a function $f\in C^2(\cI,C^2(\cS_0,\Real^3))$, where $\cI$ is an open interval of $\Real$ containing $0$, such that $\cS=f(0,\cS_0)$ and $f(t,\cdot)\in C^2(\cS_0,\Real^3)$ is injective for all $t\in \cI$. Analogous to \eqref{fbdy}, it follows that
\beqn\label{fbdym}
f(t,\cW)=\cW\qquad\text{and}\qquad f(t,\cY_0)=\cY\rfa t\in \cI.
\eeqn
The initial velocity $\bv$ and acceleration $\ba$ associated with this family are defined by
\beqn\label{velacc}
\bv(\by):=f'(0,\bx)\qquad{\rm and}\qquad\ba(\by):=f''(0,\bx)\rfa \by=f(\bx)\in\cS,
\eeqn
where a prime is used to denote differentiation with respect to $t$.

Equation \eqref{fbdym}$_1$ encompasses the requirement that points on $\cW$ remain on $\cW$, which consists of straight line segments. It follows that
\beqn\label{wcond}
\text{$\bv=(\bv\cdot\bt)\bt\qquad$ and $\qquad\ba=(\ba\cdot\bt)\bt\qquad$ on $\cW$.}
\eeqn
The constraint \eqref{scond}$_2$ can be expressed as $f(t,\bx)\in\cC$ for all $t\in \cI$ and $\bx\in\cY_0$. This immediately implies that
\beqn\label{dscond2}
f'(t,\bx)\cdot\br(f(t,\bx))=0\rfa (t,\bx)\in\cI\times\cY_0.
\eeqn
Evaluating \eqref{dscond2} at $t=0$ yields
\beqn\label{vcond}
\bv\cdot\br=0\qquad {\rm on}\ \cY.
\eeqn
Evaluating the consequence of differentiating \eqref{dscond2} with respect to $t$ at $t=0$ yields
\beqn\label{acond}
\ba\cdot\br=\abs{\bv\cdot\bz}^2-\abs{\bv}^2\leq 0\qquad {\rm on}\ \cY.
\eeqn
To arrive at \eqref{acond} requires the formula 
\beqn\label{cylL}
\nabla_\cC\br=(\bz\times\br)\otimes(\bz\times\br).
\eeqn

It is useful to define
\begin{align}
V&:=\{\bv\in C^2(\cS,\Real^3)\ |\ \text{\eqref{wcond}$_1$ and \eqref{vcond} hold}\}.
\end{align}

\subsection{Characterization of critical points}

To characterize the critical points of $E$, consider the first variation,
\begin{align}
\label{fvar}E(f(\cdot,\cS_0))'|_{t=0}&=\int_\cS \sigma\text{div}_\cS\bv\da\\
\label{stfvar}&=-\int_\cS 2\sigma H(\bv\cdot\bn)\da+\int_{\partial \cS} \sigma(\bv\cdot\bnu)\dl,
\end{align}
of the surface free-energy functional $E$ at $\cS$; \eqref{stfvar} arises from \eqref{fvar} upon using the surface-divergence theorem, which appears on page 222 in the book by Brand \cite{B}. So far, neither \eqref{wcond}$_1$ nor \eqref{vcond} has been used.

The following result characterizes the critical points of $E$.
\begin{proposition}\label{eqcp}
The surface $\cS$ is a critical point of the functional $E$ if and only if
\begin{align}
\label{H0}H&=0\qquad\text{on}\ \cS,
\\[2pt]
\label{nuc}\bnu&=\br\qquad\text{on}\ \cY.
\end{align}
\end{proposition}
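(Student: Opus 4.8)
The plan is to argue directly from the reduced first variation \eqref{stfvar}, restricting the velocity $\bv$ to the admissible space $V$, and to treat the two implications separately. A preliminary observation simplifies everything: the boundary integral in \eqref{stfvar} never contributes over $\cW$. Indeed, every $\bv\in V$ obeys \eqref{wcond}$_1$, so $\bv=(\bv\cdot\bt)\bt$ on $\cW$, and since the conormal $\bnu$ is tangent to $\cS$ and orthogonal to $\bt$, we get $\bv\cdot\bnu=0$ there. Hence for $\bv\in V$ the first variation collapses to
\[
E(f(\cdot,\cS_0))'|_{t=0}=-\int_\cS 2\sigma H(\bv\cdot\bn)\da+\int_\cY\sigma(\bv\cdot\bnu)\dl.
\]
The \emph{if} direction is then immediate: assuming \eqref{H0} and \eqref{nuc}, the area integral vanishes because $H=0$, while on $\cY$ the constraint \eqref{vcond} gives $\bv\cdot\bnu=\bv\cdot\br=0$; thus $E'=0$ for every $\bv\in V$ and $\cS$ is a critical point.

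For the \emph{only if} direction I would proceed in two stages. First, to extract \eqref{H0}, I would use interior variations: choosing $\bv\in V$ supported in the interior of $\cS$, both boundary conditions hold trivially and the boundary term drops, so that $E'=-\int_\cS 2\sigma H\psi\da=0$ where $\psi:=\bv\cdot\bn$ may be prescribed to be an arbitrary compactly supported $C^2$ function. The fundamental lemma of the calculus of variations then forces $H=0$ in the interior, and continuity of $H$ on $\cS$ upgrades this to \eqref{H0}.

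With $H=0$ in hand, the first variation reduces to $\int_\cY\sigma(\bv\cdot\bnu)\dl$, and the second stage extracts \eqref{nuc}. Here I would decompose $\bnu=(\bnu\cdot\br)\br+\bnu_\perp$ with $\bnu_\perp\cdot\br=0$; since \eqref{vcond} gives $\bv\cdot\bnu=\bv\cdot\bnu_\perp$ on $\cY$, only the component of $\bnu$ orthogonal to $\br$ is tested. I would construct a field in $V$ whose restriction to $\cY$ is $\chi\bnu_\perp$ for a nonnegative bump $\chi$ supported in the relative interior of $\cY$ (away from the corners where $\cY$ meets $\cW$), extended to a $C^2$ field on $\cS$ that vanishes near $\cW$; this lies in $V$ precisely because $\chi\bnu_\perp\perp\br$. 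Vanishing of the first variation then yields $0=\int_\cY\sigma\chi|\bnu_\perp|^2\dl$, forcing $\bnu_\perp=0$ on the support of $\chi$, and letting $\chi$ exhaust $\cY$ together with continuity gives $\bnu_\perp=0$ on all of $\cY$. Consequently $\bnu$ is parallel to $\br$, and since both are unit vectors, $\bnu=\pm\br$. The sign is then fixed geometrically rather than variationally: $\bnu$ is the outward conormal of $\cS$ and $\cY\subset\cC$ with $\cS$ lying inside the cylinder, so the outward conormal must point away from the axis of $\cC$, i.e.\ in the direction of $\br$, which delivers \eqref{nuc}.

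I expect the main obstacle to be this final stage. The delicate points are recognizing that the constraint $\bv\cdot\br=0$ makes the variation blind to the $\br$-component of $\bnu$, so that the Euler--Lagrange argument alone yields only $\bnu\parallel\br$ and the correct sign must be pinned down from the orientation conventions and the fact that $\cS$ sits inside $\cC$; and, more technically, producing admissible test fields that both realize the desired boundary values and respect \eqref{wcond}$_1$ and \eqref{vcond} near the corners where $\cW$ and $\cY$ meet. By contrast, the interior step giving \eqref{H0} and the entire \emph{if} direction are routine.
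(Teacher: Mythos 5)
Your proposal is correct and follows essentially the same route as the paper: kill the $\cW$ boundary term, localize to obtain $H=0$, then test the remaining integral over $\cY$ with admissible fields tangent to $\cC$ to force the component of $\bnu$ orthogonal to $\br$ to vanish and conclude $\bnu=\br$. The only difference is that you are more explicit about two points the paper leaves implicit, namely the construction of admissible test fields near the corners where $\cY$ meets $\cW$ and the resolution of the sign in $\bnu=\pm\br$ via the outward orientation of the conormal.
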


\begin{proof}
By definition, $\cS$ is a critical point of $E$ if and only if the first variation with respect to every admissible velocity $\bv$ vanishes---that is, if and only if
\beqn\label{fvcond}
0=-\int_\cS 2H(\bv\cdot\bn)\da+\int_{\partial \cS} \bv\cdot\bnu\dl\rfa \bv\in V.
\eeqn

Assume that \eqref{fvcond} holds. Since the normal component of a vector field belong to $V$ is unconstrained, \eqref{fvcond} implies that \eqref{H0} holds. On invoking \eqref{wcond}$_1$ and \eqref{H0}, \eqref{fvcond} reduces to
\beqn\label{rfirstvar}
0=\int_{\cY}\bv\cdot\bnu\, \text{dl}\rfa \bv\in V.
\eeqn
Since $\{\br,\bt,\bg\}$, with $\bg:=\br\times\bt$, is an orthonormal basis on $\cY$ and \eqref{vcond} holds, $\bv$ admits a decomposition of the form
\beqn\label{vrep}
\bv=(\bv\cdot\bt)\bt+(\bv\cdot\bg)\bg\qquad\text{on $\cY$,\hspace{.1in} for all $\bv\in V.$}
\eeqn
On using \eqref{vrep} and the fact that $\bnu\cdot\bt=0$ on $\cY$, \eqref{rfirstvar} becomes
\beqn\label{evar}
0=\int_\cY (\bv\cdot\bg) (\bg\cdot\bnu)\, \text{dl}\rfa \bv\in V.
\eeqn
Since the component of vector field in $V$ in the direction $\bg$ is unrestricted, \eqref{evar} yields $\bg\cdot\bnu=0$ and, hence, \eqref{nuc} holds.

It is easy shown that the conditions \eqref{H0} and \eqref{nuc} imply \eqref{fvcond}.
\end{proof}

As expected, \eqref{H0} is the classical condition that the mean curvature of a critical point of $E$ must vanish. The condition \eqref{nuc} embodies the requirement that the portion $\cY$ of its boundary $\partial\cS$ disjoint from $\cW$ meets the inner wall of $\cC$ at a right angle.

It is possible to construct a family of surfaces that satisfy the two conditions of Proposition \ref{eqcp}. Fix an integer $n$ and put $\theta:=\theta_0+\pi n$. Consider the helicoid $\cS_\theta$ with the parameterization
\beqn\label{helicoid}
f(y,z)=(-y\sin(\theta z/\rho),y\cos(\theta z/\rho),z)\rfa y\in [-1,1],\ z\in[0,\rho].
\eeqn
The integer $n$ indicates how many half twists the helicoid makes. The boundary $\partial\cS_\theta$ of $\cS_\theta$ satisfies the conditions \eqref{scond}. It is not difficult to see that \eqref{nuc} holds for all such surfaces; moreover, it is well-known that the mean curvature of a helicoid vanishes. Hence, $\cS_\theta$ is a critical point of $E$. It seems reasonable to conjecture that these helicoids are the only surfaces in $\cA$ that satisfy the conditions \eqref{H0} and \eqref{nuc} and the boundary condition \eqref{scond}.

\subsection{Stability of critical points}

The second variation of $E$, while less well-known, is provided by Simon \cite{Simon} and takes the form
\beqn\label{svar}
E(f(\cdot,\cS_0))''|_{t=0}=\int_\cS\sigma\big[\abs{(\nabla_\cS\bv) ^{\trans}\bn}^2+(\text{div}_\cS\bv)^2-\text{tr}(\nabla_\cS\bv^2)+\text{div}_\cS\ba\big]\da.
\eeqn
A lengthy but straightforward calculation shows that 
\begin{align}
\nonumber(\text{div}_\cS\bv)^2-\text{tr}(\nabla_\cS\bv^2)&=\bv\cdot\big[\text{div}_\cS[(\nabla_\cS\bv) ^{\trans}]-\nabla_\cS(\text{div}_\cS\bv)]+\text{div}_\cS[(\text{div}_\cS\bv)\bv-(\nabla_\cS\bv)\bv]
\\[2pt]
\label{oddrel}&=(\bL\cdot\nabla_\cS\bv)(\bv\cdot\bn)-\bL\bv\cdot(\nabla_\cS\bv) ^{\trans}\bn+\text{div}_\cS[(\text{div}_\cS\bv)\bv-(\nabla_\cS\bv)\bv].
\end{align}
Using \eqref{oddrel} in \eqref{svar} and applying the surface-divergence theorem yields
\begin{align}
\nonumber E(f(\cdot,\cS_0))''|_{t=0}&=\int_\cS\sigma\big[\abs{(\nabla_\cS\bv)^{\trans}\bn}^2+(\bL\cdot\nabla_\cS\bv)(\bv\cdot\bn)-\bL\bv\cdot(\nabla_\cS\bv) ^{\trans}\bn\\
&\qquad\qquad+H[(\text{div}_\cS\bv)\bv-(\nabla_\cS\bv)\bv+\ba]\cdot\bn\big]\da\\
\label{stsvar}&\qquad +\int_{\partial\cS}\sigma[(\text{div}_\cS\bv)\bv-(\nabla_\cS\bv)\bv+\ba]\cdot\bnu\dl.
\end{align}
By definition, the surface $\cS$ satisfies the second-variation stability condition if
\beqn\label{svsc}
E(f(\cdot,\cS_0))''|_{t=0}\geq 0\rfa \bv\in V.
\eeqn
Although the second-variation stability condition is necessary for the surface to be a local minimum of $E$, it is not sufficient.

To characterize the critical points of $E$ that satisfy the second-variation stability condition, it is useful to introduce
\beqn\label{normvar}
V_N:=\{c\in C^2(\cS,\Real)\ |\ c=0\ {\rm on}\ \cW\}.
\eeqn
In proving the next proposition, it will become clear that the elements of $V_N$ represent the normal components of elements of $V$.

\begin{proposition}\label{propstcp}
A critical surface $\cS$ satisfies the second-variation stability condition if and only if
\beqn\label{stsvareq}
\int_\cY (\bt\cdot\bz)^2c^2{\rm dl}\leq\int_\cS( \abs{\nabla_\cS c}^2+2Kc^2)\, {\rm da}\rfa c\in V_N.\\
\eeqn

\end{proposition}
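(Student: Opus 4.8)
The plan is to show that, at a critical surface, the second variation $Q(\bv):=E(f(\cdot,\cS_0))''|_{t=0}$ depends on $\bv\in V$ only through its normal component $c:=\bv\cdot\bn$, that this component ranges over precisely $V_N$, and that on the purely normal variation $\bv=c\bn$ the quadratic form $Q$ equals $\sigma$ times the difference of the two sides of \eqref{stsvareq}. Granting the reduction to the normal component, the equivalence in the proposition is then immediate.

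First I would substitute the criticality conditions \eqref{H0} and \eqref{nuc} into \eqref{stsvar}. Because $H=0$, the interior term carrying the factor $H$ vanishes, so $\ba$ survives only through the boundary integral, where it enters solely as $\ba\cdot\bnu$; on $\cY$ this is $\ba\cdot\br$, which \eqref{acond} fixes in terms of $\bv$, so $Q$ is a genuine quadratic form in $\bv$ alone. Splitting $\partial\cS=\cW\cup\cY$, the contribution over $\cW$ drops out: there \eqref{wcond} makes $\bv$ and $\ba$ parallel to $\bt$, and since $\cW$ is a straight segment one has $\nabla_\bt\bt=\b0$, whence every term in the boundary integrand is orthogonal to $\bnu$. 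On $\cY$ I would use $\bnu=\br$. This leaves an interior quadratic in $\nabla_\cS\bv$ together with a boundary integral over $\cY$. I would also record here that $\bv\parallel\bt$ on $\cW$ forces $c=0$ there, so $c\in V_N$, while conversely, since $\bn\cdot\br=\bn\cdot\bnu=0$ on $\cY$ and $c=0$ on $\cW$, the field $c\bn$ itself lies in $V$ for every $c\in V_N$; this is the sense in which $V_N$ records the normal components of $V$.

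Next I would evaluate $Q(c\bn)$. Writing $\nabla_\cS(c\bn)=\bn\otimes\nabla_\cS c-c\bL$ and using $\bL\bn=\b0$, the interior integrand collapses to $|\nabla_\cS c|^2-|\bL|^2c^2$, which by \eqref{curvatures} with $H=0$ is $|\nabla_\cS c|^2+2Kc^2$. For the boundary term one checks that $(\nabla_\cS\bv)\bv=\b0$ and $(\text{div}_\cS\bv)\bv\cdot\br=0$ when $\bv=c\bn$, so only $\ba\cdot\br$ remains; inserting \eqref{acond}, and using that $\br\perp\bz$ on $\cC$ together with the orthonormality of $\{\bt,\br,\bn\}$ on $\cY$, converts $|\bv\cdot\bz|^2-|\bv|^2$ into $-(\bt\cdot\bz)^2c^2$. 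Thus $Q(c\bn)=\sigma\int_\cS(|\nabla_\cS c|^2+2Kc^2)\da-\sigma\int_\cY(\bt\cdot\bz)^2c^2\dl$, and since $c\bn\in V$ the implication ``stability $\Rightarrow$ \eqref{stsvareq}'' follows at once.

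The harder converse requires showing $Q(\bv)=Q(c\bn)$ for arbitrary $\bv=\bv_\parallel+c\bn\in V$. The two interior integrands differ by $\nabla_\cS c\cdot\bL\bv_\parallel+c\,\bL\cdot\nabla_\cS\bv_\parallel$, which I would recognize as the surface divergence $\text{div}_\cS(c\bL\bv_\parallel)$ after invoking the Codazzi identity $\text{div}_\cS\bL=2\nabla_\cS H=\b0$; by the surface-divergence theorem and $c=0$ on $\cW$ this reduces to $\int_\cY c(\bL\bv_\parallel)\cdot\br\dl$. The main obstacle is to verify that this integral cancels the difference of the $\cY$-boundary integrands pointwise. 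I would resolve $\bv$ on $\cY$ in the frame $\{\bt,\br,\bn\}$ and express $(\nabla_\cS\bv)\bv\cdot\br$, $(\bL\bv_\parallel)\cdot\br$ and $\ba\cdot\br$ through the geodesic curvature $\kappa_g:=\nabla_\bt\bt\cdot\br$ and geodesic torsion $\tau_g:=\bL\bt\cdot\br$ of $\cY$. The cancellation then hinges on the two identities $\kappa_g=(\bt\cdot\bz)^2-1$ and $\tau_g=-(\bt\cdot\bz)(\bn\cdot\bz)$, which I expect to obtain by differentiating $\bt\cdot\br=0$ and $\bn\cdot\br=0$ along $\cY$ and substituting the cylinder relation \eqref{cylL}. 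With these in hand all tangential contributions cancel, so $Q(\bv)=Q(c\bn)$; hence $Q\geq0$ on $V$ is equivalent to $Q(c\bn)\geq0$ for all $c\in V_N$, which is exactly \eqref{stsvareq}.
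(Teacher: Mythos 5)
Your proposal is correct and follows essentially the same route as the paper's proof: decompose $\bv\in V$ into tangential and normal parts, compute the second variation at a critical point using \eqref{H0}, \eqref{nuc}, \eqref{wcond}, and \eqref{acond}, and kill the tangential contribution by writing $\bL\bu\cdot\nabla_\cS c+c\,\bL\cdot\nabla_\cS\bu=\text{div}_\cS(c\bL\bu)$ via $\text{div}_\cS\bL=0$ and cancelling the resulting $\cY$-integral against the boundary terms using the cylinder identities \eqref{decomp3}. The only difference is organizational (you evaluate $Q(c\bn)$ first and then prove $Q(\bv)=Q(c\bn)$, whereas the paper reduces the general expression in one pass), and your identities $\kappa_g=(\bt\cdot\bz)^2-1$ and $\tau_g=-(\bt\cdot\bz)(\bn\cdot\bz)$ are exactly the paper's \eqref{decomp3} rewritten using $(\bt\cdot\bz)^2+(\bn\cdot\bz)^2=1$ on $\cY$.
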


\begin{proof}
Suppose now that $\cS$ is a critical point of $E$. First, using \eqref{wcond}$_1$, it follows that
\beqn\label{wcond0}
[(\nabla_\cS\bv)\bv]\cdot\bnu=(\bv\cdot\bt)[\nabla_\bt((\bv\cdot\bt)\bt)]\cdot\bnu=(\bv\cdot\bt)[\nabla_\bt(\bv\cdot\bt)](\bt\cdot\bnu)=0\qquad{\rm on}\ \cW,
\eeqn
Using \eqref{wcond}, \eqref{acond}, Proposition \ref{eqcp}, and \eqref{wcond0} in \eqref{stsvar} yields
\begin{align}
\nonumber E(f(\cdot,\cS_0))''|_{t=0}&=\int_\cS\sigma\big[\abs{(\nabla_\cS\bv) ^{\trans}\bn}^2+(\bL\cdot\nabla_\cS\bv)(\bv\cdot\bn)-\bL\bv\cdot(\nabla_\cS\bv) ^{\trans}\bn\big]\da\\
\label{stsvar2}&\qquad -\int_{\cY}\sigma\big([(\nabla_\cS\bv)\bv]\cdot\br+\abs{\bv}^2-\abs{\bv\cdot\bz}^2\big)\dl.
\end{align}
Hence, $\cS$ satisfies the second variation stability condition if and only if
\begin{align}
\nonumber0&\leq\int_\cS\big[\abs{(\nabla_\cS\bv) ^{\trans}\bn}^2+(\bL\cdot\nabla_\cS\bv)(\bv\cdot\bn)-\bL\bv\cdot(\nabla_\cS\bv) ^{\trans}\bn\big]\da\\
\label{stcond}&\qquad -\int_{\cY}\big([(\nabla_\cS\bv)\bv]\cdot\br+\abs{\bv}^2-\abs{\bv\cdot\bz}^2\big)\dl\rfa \bv\in V.
%-\int_\cW((\nabla_\cS\bv)\bv)\cdot\br\dl\rfa \bv\in V.
\end{align}

To analyze \eqref{stcond}, it is useful to decompose each $\bv\in V$ into tangential and normal components. In particular, for all $\bv\in V$, there is a vector field $\bu$ and a scalar field $c$ defined on $\cS$, both of class $C^2$, such that
\beqn\label{vdecomp}
\bv=\bu+c\bn\qquad{\rm with}\qquad \bu\cdot\bn=0\qquad \text{on}\ \cS
\eeqn
where, from \eqref{wcond} and \eqref{vcond}, $\bu$ and $c$ satisfy
\begin{align}
\label{decompcond1}\bu=(\bu\cdot\bt)\bt\qquad \text{and} \qquad c=0\qquad& \text{on } \cW,\\[2pt]
\label{decompcond2}\bu\cdot\br=0\hspace{.75in}& \text{on}\ \cY.
\end{align}
Applying the surface gradient to the relation $\bu\cdot\bn=0$, which holds on $\cS$, and using \eqref{defL} yields
\beqn\label{tancurve}
(\nabla_\cS\bu) ^{\trans}\bn=\bL\bu\qquad \text{on}\ \cS.
\eeqn

Fix an arbitrary $\bv\in V$, and hence a corresponding $\bu$ and $c$. Using \eqref{curvatures}$_2$, \eqref{vdecomp}, and \eqref{tancurve}, straightforward calculations show that
\beqn\label{decomp1}
\left.
\begin{array}{rl}
\abs{(\nabla_\cS\bv) ^{\trans}\bn}^2=&\abs{\bL\bu}^2+2\bL\bu\cdot\nabla_\cS c+\abs{\nabla_\cS c}^2,
\\[6pt]
\ (\bL\cdot\nabla_\cS\bv)(\bv\cdot\bn)=&(\bL\cdot\nabla_\cS\bu)c+2Kc^2,\\[6pt]
\bL\bv\cdot(\nabla_\cS\bv) ^{\trans}\bn=&\abs{\bL\bu}^2+\bL\bu\cdot\nabla_\cS c,
\end{array}
\right\}\qquad\text{on }\cS.
\eeqn
Moreover, using \eqref{vdecomp} and \eqref{decompcond2} yields
\beqn\label{decomp2}
\left.
\begin{array}{rl}
[(\nabla_\cS\bv)\bv]\cdot\br=&(\bu\cdot\bt)^2[(\nabla_\bt\bt)\cdot\br]+(\bu\cdot\bt)[(\nabla_\bt\bn)\cdot\br]c,
\\[6pt]
\abs{\bv}^2-\abs{\bv\cdot\bz}^2=&(\bt\cdot\bz)^2c^2+(\bu\cdot\bt)^2(\bn\cdot\bz)^2-2(\bu\cdot\bt)(\bn\cdot\bz)(\bt\cdot\bz)c,
\end{array}
\right\}\qquad\text{on }\cY.
\eeqn
In view of \eqref{cylL} and the identities $\bt=\bn\times\bnu$ and $\bn=\bnu\times\bt$ connecting $\bt$, $\bn$, and $\bnu$, it follows that
\beqn\label{decomp3}
\left.
\begin{array}{rl}
\nabla_\bt\bt\cdot\br=&-(\bn\cdot\bz)^2,
\\[6pt]
\nabla_\bt\bn\cdot\br=&(\bn\cdot\bz)(\bt\cdot\bz),
\end{array}
\right\}\qquad\text{on }\cY.
\eeqn
With \eqref{vdecomp}--\eqref{decomp3}, \eqref{stcond} becomes
\begin{align}
\label{stcond2}0&\leq\int_\cS\big[ \abs{\nabla_\cS c}^2+2Kc^2 +\bL\cdot(c\nabla_\cS\bu+\nabla_\cS c\otimes\bu)\big]\da -\int_{\cY}\big[ (\bt\cdot\bz)^2c^2-(\bu\cdot\bt)(\bn\cdot\bz)(\bt\cdot\bz)c \big]\dl.
\end{align}
Notice that by the surface-divergence theorem, integration by parts, the identity $\text{div}_\cS\bL=0$, which follows from \eqref{H0}, and \eqref{decompcond2},
\beqn\label{decomp4}
\int_\cS\bL\cdot(c\nabla_\cS\bu+\nabla_\cS c\otimes\bu)\da=\int_\cS\bL\cdot\nabla_\cS(c\bu)=\int_\cY c\bL\bu\cdot\br=\int_\cY-c(\bu\cdot\bt)\nabla_\bt\bn\cdot\br,
\eeqn
which, with \eqref{decomp3}$_2$, can be used to reduce \eqref{stcond2} to
\beqn\label{stcond3}
\int_\cY (\bt\cdot\bz)^2c^2\dl\leq\int_\cS( \abs{\nabla_\cS c}^2+2Kc^2)\da.
\eeqn

Since $\bv$ is an arbitrary element of $V$, the $c$ appearing in \eqref{stcond2} can be chosen arbitrarily as long as it is consistent with the condition $c=0$ on $\cW$. Hence, \eqref{stsvareq} holds. Showing that \eqref{stcond3} implies \eqref{stcond} is a matter of using the identities \eqref{decomp1}--\eqref{decomp3} and \eqref{decomp4} and tracing the proof backwards.
\end{proof}

Proposition \ref{propstcp} gives an alternate statement of the second-variation stability condition in terms of a functional inequality.  This inequality resembles a Sobolev trace inequality involving the $L^2$ norm on the boundary weighted with $(\bt\cdot\bz)^2$ and part of the $W^{1,2}$ norm on $\cS$ weighted with $2K$.  Equation \eqref{stsvareq} expresses the requirement that the optimal constant for the trace inequality with these weighted norms is less than or equal to $1$.  This condition places restrictions on the possible surfaces that may constitute local minima of the free-energy $E$.  Notice that this is in opposition to the usual discussion of optimal constants in Sobolev trace inequalities.  Traditionally, the trace inequality is applied and the optimal constant is sought over a given domain.  Here, a bound on the optimal constant provides information regarding the geometry of the domain.

It is possible to find an equivalent statement of the second-variation stability condition involving a partial-differential eigenvalue problem.

\begin{proposition}\label{propstcp2}
The surface $\cS$ satisfies the second-variation stability condition if and only if the smallest eigenvalue $\lambda$ of the problem
\beqn\label{evp}
\left.
\begin{array}{cl}
\Delta_\cS c-2\lambda Kc=0,&\qquad {\rm on}\ \cS,\\[6pt]
\nabla_\br c-\lambda (\bt\cdot\bz)^2c=0,&\qquad {\rm on}\ \cY,\\[6pt]
c=0,&\qquad{\rm on}\ \cW,
\end{array}
\right\}
\eeqn
is greater than or equal to $1$, where weak solutions to \eqref{evp} are sought in $W^{1,2}(\cS,\Real)$.
\end{proposition}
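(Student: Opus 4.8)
The plan is to read the inequality \eqref{stsvareq} of Proposition~\ref{propstcp} as the assertion that a suitable Rayleigh quotient is bounded below by $1$, and then to identify the smallest eigenvalue of \eqref{evp} with the minimum of that quotient. The conceptual key is a sign observation: since $\cS$ is a critical point, Proposition~\ref{eqcp} gives $H=0$ on $\cS$, and a surface with vanishing mean curvature has principal curvatures of the form $\pm\kappa$, so $K\leq 0$ on $\cS$. Consequently the functional
\[
D(c):=\int_\cY(\bt\cdot\bz)^2c^2\dl-2\int_\cS Kc^2\da
\]
is a sum of two nonnegative integrals and hence satisfies $D(c)\geq 0$ for every $c$. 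It is $D(c)$ that will serve as the denominator in the Rayleigh quotient attached to \eqref{evp}, and its nonnegativity is what makes ``the smallest eigenvalue'' well behaved.

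First I would record the weak form of \eqref{evp}. Let $\cH$ denote the closure in $W^{1,2}(\cS,\Real)$ of the functions vanishing on $\cW$, the natural completion of $V_N$. Multiplying the bulk equation by a test function $\psi\in\cH$, integrating over $\cS$, applying the surface-divergence theorem, and using $\psi=0$ on $\cW$ together with $\bnu=\br$ on $\cY$ (Proposition~\ref{eqcp}) to rewrite the boundary term through the natural boundary condition $\nabla_\br c=\lambda(\bt\cdot\bz)^2c$, I obtain
\[
\int_\cS\nabla_\cS c\cdot\nabla_\cS\psi\da=\lambda D_b(c,\psi)\rfa\psi\in\cH,
\]
where $D_b$ is the symmetric bilinear form polarizing $D$. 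Taking $\psi=c$ shows that every eigenfunction satisfies $\int_\cS|\nabla_\cS c|^2\da=\lambda\,D(c)$. An eigenfunction cannot have $D(c)=0$, for then $\nabla_\cS c\equiv 0$, and $c=0$ on $\cW$ would force $c\equiv 0$; hence each eigenvalue equals the value at its eigenfunction of the Rayleigh quotient
\[
R(c):=\frac{\int_\cS|\nabla_\cS c|^2\da}{D(c)},
\]
and, by the sign of $D$, every eigenvalue is nonnegative.

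Next I would show that $\lambda_1:=\inf\{R(c):c\in\cH,\ D(c)>0\}$ is attained and is itself an eigenvalue, by the direct method. A Poincar\'e inequality, valid because every $c\in\cH$ vanishes on the positive-length set $\cW$, shows that $\int_\cS|\nabla_\cS c|^2\da$ is equivalent to the full $W^{1,2}$ norm on $\cH$, so a minimizing sequence normalized by $D(c)=1$ is bounded in $\cH$. The Rellich embedding $W^{1,2}(\cS)\hookrightarrow L^2(\cS)$ and the compact trace $W^{1,2}(\cS)\hookrightarrow L^2(\cY)$ make $D$ weakly continuous, while $c\mapsto\int_\cS|\nabla_\cS c|^2\da$ is weakly lower semicontinuous; passing to a weak limit therefore yields a minimizer $c_\ast$ with $D(c_\ast)=1$ and $R(c_\ast)=\lambda_1$. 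Its Euler--Lagrange equation is precisely the weak form above with $\lambda=\lambda_1$, so $\lambda_1$ is an eigenvalue, and since every eigenvalue is a value of $R$ it is the smallest one.

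Finally I would assemble the equivalence. Rearranged, \eqref{stsvareq} reads $D(c)\leq\int_\cS|\nabla_\cS c|^2\da$ for all $c\in V_N$, hence, by density, for all $c\in\cH$. When $D(c)=0$ this is automatic because the right-hand side is nonnegative, so the second-variation stability condition is equivalent to $R(c)\geq 1$ for every $c$ with $D(c)>0$, i.e.\ to $\lambda_1\geq 1$; as $\lambda_1$ is the smallest eigenvalue, this is the claim. I expect the main obstacle to be the existence step: verifying coercivity (the Poincar\'e inequality on $\cH$) and the compactness needed to pass to the limit in $D$, and confirming that the sign condition $K\leq 0$ rules out the spurious negative eigenvalues that an indefinite denominator would otherwise admit.
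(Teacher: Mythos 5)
Your proposal is correct and follows essentially the same route as the paper: both reduce the statement to the inequality of Proposition~\ref{propstcp}, observe that $H=0$ forces $K\leq 0$ so the denominator $D(c)$ of the Rayleigh quotient is nonnegative, prove attainment of the infimum by the direct method (boundedness, Rellich and compact trace, weak lower semicontinuity), and identify the minimizer with a weak eigenfunction via the Euler--Lagrange equation. Your normalization $D(c)=1$ is a minor streamlining that avoids the paper's contradiction argument for showing the weak limit stays in the admissible class, but the substance of the argument is the same.
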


\begin{proof}
To establish this result, the characterization of the second-variation stability condition given in Proposition \ref{propstcp} will be used. First notice that given $c\in V_N$ for which
\beqn\label{nonzero}
\int_\cY (\bt\cdot\bz)^2c^2\dl-\int_\cS 2Kc^2\da
\eeqn
vanishes, the inequality \eqref{stsvar2} holds trivially. Moreover, since its mean curvature vanishes, the Gaussian curvature of $\cS$ obeys $K\leq0$; hence, \eqref{nonzero} is never negative. It follows that \eqref{stsvareq} is equivalent to
\beqn\label{eqineq}
1\leq \frac{\int_\cS\abs{\nabla_\cS c}^2\da}{\int_\cY (\bt\cdot\bz)^2c^2\dl-\int_\cS 2Kc^2\da}=:J(c)\rfa c\in V'_N,
\eeqn
where $V'_N:=\{c\in V_N\ |\ \eqref{nonzero}\ \text{is nonzero}\}$. 

First, it is shown that $J$, defined in \eqref{eqineq}, has a minimizer in $V_N'$. Toward this end, let $\{c_n\}_{n\in\Nat}$ be a minimizing sequence in $V'_N$. Since scaling the terms of this sequence does not effect its minimizing property, assume, without loss of generality, that the terms of the sequence are normalized according to
\beqn\label{normalize}
\int_\cS (c_n^2+\abs{\nabla_\cS c_n}^2)\da = 1\rfa n\in\Nat.
\eeqn
It follows trivially that the sequence is bounded in $W^{1,2}(\cS,\Real)$ and that, by the Banach--Alaoglu theorem and a Sobolev trace inequality, there is a $c_\infty\in W^{1,2}(\cS,\Real)$ and a subsequence of $\{c_n\}_{n\in\Nat}$, which for simplicity is also denoted by $\{c_n\}_{n\in\Nat}$, such that
\begin{align}
\label{conv1}c_n&\xrightharpoonup{W^{1,2}(\cS,\Real)} c_\infty,\\
\label{conv2}c_n&\xrightarrow{\hspace{.06in} L^2(\cS,\Real)\hspace{.06in}} c_\infty,\\
\label{conv3}c_n&\xrightarrow{\hspace{.025in}L^2(\partial\cS,\Real)\hspace{.025in}} c_\infty.
\end{align}
Next, it must be shown that $c_\infty\in V'_N$ and that $c_\infty$ minimizes $J$. Since $c_n=0$ on $\cW$ for each $n\in\Nat$, it follows from \eqref{conv3} that $c_\infty=0$ on $\cW$. To see that \eqref{nonzero} is nonzero for $c$ replaced by $c_\infty$, begin by assuming that it is zero and seek a contradiction. Since $\{c_n\}_{n\in\Nat}$ is a minimizing sequence,
\beqn\label{Jbound}
\lim_{n\rightarrow\infty}\frac{\int_\cS\abs{\nabla_\cS c_n}^2\da}{\int_\cY (\bt\cdot\bz)^2c_n^2\dl-\int_\cS 2Kc_n^2\da}=\lim_{n\rightarrow\infty}J(c_n)<\infty.
\eeqn
It follows from \eqref{conv2}, \eqref{conv3}, and the boundedness of both $\bt\cdot\bz$ and $K$ that
\beqn\label{limit}
\lim_{n\rightarrow\infty}\int_\cY (\bt\cdot\bz)^2c_n^2\dl-\int_\cS 2Kc_n^2\da=\int_\cY (\bt\cdot\bz)^2c_\infty^2\dl-\int_\cS 2Kc_\infty^2\da.
\eeqn
Since, by assumption, the right-hand side of \eqref{limit} is zero, \eqref{Jbound} and \eqref{limit} together yield
\beqn\label{known}
0=\lim_{n\rightarrow\infty}\int_\cS\abs{\nabla_\cS c_n}^2\da\geq\int_\cS\abs{\nabla_\cS c_\infty}^2\da.
\eeqn
Hence, by the Poincar\'e inequality, $\int_\cS c_\infty^2\da=0$.  This together with \eqref{known} and \eqref{normalize} yields a contradiction from which it follows that \eqref{nonzero} must be nonzero for $c$ replaced by $c_\infty$; thus, $c_\infty\in V'_N$. Thus, by \eqref{conv1}--\eqref{conv3}, $J(c_\infty)$ must satisfy the inequality
\beqn\label{wlsc}
J(c_\infty)\leq\lim_{n\rightarrow\infty}J(c_n).
\eeqn
Since $\{c_n\}_{n\in\Nat}$ is a minimizing sequence, \eqref{wlsc} implies that $c_\infty$ is a minimizer of $J$.

Assume that \eqref{eqineq} holds and that $c\in V'_N$ is a minimizer of $J$. Find a sequence $\{c_n\}_{n\in\Nat}\subset V'_N$ converging to $c$ strongly in $W^{1,2}(\cS,\Real)$. It follows from \eqref{eqineq} that
\beqn
\lambda:=J(c)=\lim_{n\rightarrow \infty}J(c_n)\geq 1.
\eeqn
Since $c$ is a minimizer of $J$, the first variation of $J$ at $c$ must vanish. This can be expressed as
\beqn\label{fvcJ}
\int_\cS (\nabla_\cS c\cdot\nabla_\cS\xi+2\lambda K\hspace{-.02in} c \xi)\da-\int_\cY (\bt\cdot\bz)^2c\xi\dl=0\rfa \xi\in C^\infty(\cS,\Real)\ \text{with}\ \xi=0\ {\rm on}\ \cW.
\eeqn
The condition \eqref{fvcJ} requires that $c$ is a weak solution of \eqref{evp}.

Conversely, assume that $c\in W^{1,2}(\cS,\Real)$ is a weak solution of \eqref{evp}---that is, assume that \eqref{fvcJ} holds and that $c=0$ on $\cW$---with minimal value $\lambda\geq 1$. It follows from \eqref{fvcJ} that $c$ is a critical point of $J$. Since $\lambda$ is minimal and $J$ has a minimizer, $c$ must minimize $J$. Moreover, since $\lambda\geq 1$, \eqref{eqineq} must hold.
\end{proof}

Proposition \ref{propstcp} or, equivalently, Proposition \ref{propstcp2}, places restrictions on stable critical points of the free-energy functional $E$, as will become clear in the next section on considering the surface $\cS_0$. Ideally it should be possible to use one of the last two propositions to obtain explicit information about the stable critical points, such as, for example, a condition on $\rho$ and $\theta$ which ensures that $\cS_0$ is stable.

While the two previous propositions give conditions necessary for stability, they do not give conditions sufficient for stability. However, it is true that if a critical point $\cS$ of $E$ satisfies
\beqn\label{stsvarseq}
\int_\cY (\bt\cdot\bz)^2c^2{\rm dl}<\int_\cS( \abs{\nabla_\cS c}^2+2Kc^2)\, {\rm da}\rfa c\in V_N,\\
\eeqn
then that critical point is stable. Also, if there is a $c\in V_N$ such that
\beqn\label{instsvarseq}
\int_\cY (\bt\cdot\bz)^2c^2{\rm dl}>\int_\cS( \abs{\nabla_\cS c}^2+2Kc^2)\, {\rm da},\\
\eeqn
then that critical point is unstable. Phrased in terms of the eigenvalue problem \eqref{evp}, if the smallest eigenvalue is strictly greater than $1$, then the relevant critical point is stable, but if it is strictly less than $1$, then the relevant critical point is unstable.

%%%%%%%%%%%%%%%%%%%%

\section{Stability analysis of $\cS_0$}

Suppose that $\theta_0=0$, in which case the surface $\cS_0$ described in the second paragraph of Section \ref{sectsetup} belongs to $\cA$. It is immediately obvious that $\cS_0$ satisfies \eqref{H0} and \eqref{nuc} and, hence, is a critical point of $E$. Whether $\cS_0$ satisfies the second-variation stability condition---that is, whether \eqref{stcond} holds---is not, however, obvious. It transpires that $\cS_0$ need not be stable. In this section, conditions on $\rho$ necessary and sufficient to ensure that \eqref{stsvareq} holds are determined.

Since $\cS_0$ is flat, its curvature vanishes and, hence, \eqref{stsvareq} reduces to
\beqn\label{svcond0}
\int_{\cY_0}c^2\dl\leq\int_{\cS_0}\abs{\nabla_{\cS_0} c}^2\da\rfa c\in V_N;
\eeqn
further, the eigenvalue problem \eqref{evp} reduces to
\beqn\label{evp0}
\left.
\begin{array}{cl}
\Delta c=0,&\qquad {\rm on}\ [-1,1]\times [0,\rho],\\[6pt]
c_y(1,z)-\lambda c(1,z)=0&\rfa z\in [0,\rho],\\[6pt]
c_y(-1,z)+\lambda c(-1,z)=0&\rfa z\in [0,\rho],\\[6pt]
c(y,0)=c(y,\rho)=0,&\rfa y\in[-1,1].
\end{array}
\right\}
\eeqn

\begin{proposition}\label{prop4.1}
The surface $\cS_0$ satisfies the second-variation stability condition if and only if 
\beqn\label{Sostc}
1\leq \frac{\pi}{\rho}\tanh\big(\frac{\pi}{\rho}\big).
\eeqn
\end{proposition}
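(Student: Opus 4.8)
The plan is to invoke Proposition \ref{propstcp2}: since $\cS_0$ is flat its Gaussian curvature vanishes, so stability is equivalent to the smallest eigenvalue $\lambda$ of the reduced problem \eqref{evp0} being at least $1$. It therefore suffices to compute $\lambda$ explicitly and show that it equals $\frac{\pi}{\rho}\tanh(\frac{\pi}{\rho})$; the criterion \eqref{Sostc} is then immediate. Equivalently, one may work directly with the functional inequality \eqref{svcond0} and identify $\lambda$ with the infimum of the Rayleigh quotient $\int_{\cS_0}\abs{\nabla_{\cS_0}c}^2\da \big/ \int_{\cY_0}c^2\dl$ over $c\in V_N$.

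First I would separate variables in \eqref{evp0}. The homogeneous Dirichlet conditions $c(y,0)=c(y,\rho)=0$ force the $z$-dependence to be $\sin(n\pi z/\rho)$ for $n\in\Snat$, and harmonicity then forces the $y$-dependence $a(y)$ to solve $a''-(n\pi/\rho)^2a=0$, whence $a$ is a combination of $\cosh(n\pi y/\rho)$ and $\sinh(n\pi y/\rho)$. The two Robin conditions at $y=\pm1$ are invariant under $y\mapsto -y$, so it is natural to split into the even solution $\cosh(n\pi y/\rho)$ and the odd solution $\sinh(n\pi y/\rho)$. Substituting each into the boundary condition $c_y(1,z)-\lambda c(1,z)=0$ and cancelling the common factor $\sin(n\pi z/\rho)$ yields the eigenvalues
\[
\lambda_n^{\mathrm{even}}=\frac{n\pi}{\rho}\tanh\Big(\frac{n\pi}{\rho}\Big),\qquad \lambda_n^{\mathrm{odd}}=\frac{n\pi}{\rho}\coth\Big(\frac{n\pi}{\rho}\Big),
\]
and one checks that the condition at $y=-1$ produces the same relation.

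It remains to identify the smallest eigenvalue. Because $\coth>\tanh$ on $(0,\infty)$ and the map $x\mapsto x\tanh x$ is strictly increasing on $(0,\infty)$, the smallest value among all $\lambda_n^{\mathrm{even}}$ and $\lambda_n^{\mathrm{odd}}$ is $\lambda_1^{\mathrm{even}}=\frac{\pi}{\rho}\tanh(\frac{\pi}{\rho})$, attained by $c(y,z)=\cosh(\pi y/\rho)\sin(\pi z/\rho)$. The main obstacle is to confirm that this separated mode furnishes the \emph{global} infimum of the Rayleigh quotient, and not merely the least among separated solutions. I would settle this by expanding an arbitrary $c\in V_N$ in a Fourier sine series $c(y,z)=\sum_{n\geq1}a_n(y)\sin(n\pi z/\rho)$; orthogonality decouples both integrals mode by mode, reducing the quotient to a ratio of sums in which the $n$th numerator is bounded below by $\lambda_n^{\mathrm{even}}$ times the $n$th denominator, via the one-dimensional trace inequality for the operator $a\mapsto -a''+(n\pi/\rho)^2a$. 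Since $\lambda_n^{\mathrm{even}}\geq\lambda_1^{\mathrm{even}}$ for every $n$, summing gives $\int_{\cS_0}\abs{\nabla_{\cS_0}c}^2\da\geq\lambda_1^{\mathrm{even}}\int_{\cY_0}c^2\dl$, with equality for the first mode. Hence $\lambda=\lambda_1^{\mathrm{even}}=\frac{\pi}{\rho}\tanh(\frac{\pi}{\rho})$, and stability is equivalent to $\lambda\geq1$, which is exactly \eqref{Sostc}.
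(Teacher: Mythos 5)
Your proposal is correct and follows essentially the same route as the paper: both reduce to the eigenvalue problem \eqref{evp0} via Proposition \ref{propstcp2}, expand in a sine series in $z$ to obtain the one-dimensional Robin problems, and identify the smallest eigenvalue as $\frac{\pi}{\rho}\tanh(\frac{\pi}{\rho})$, attained by $\cosh(\pi y/\rho)\sin(\pi z/\rho)$. You are somewhat more explicit than the paper about the even/odd split (the $\coth$ branch) and about why the separated $k=1$ mode furnishes the global infimum of the Rayleigh quotient, points the paper dispatches with ``it is easy to show.''
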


\begin{proof}
To demonstrate the necessity of \eqref{Sostc}, assume that \eqref{svcond0} holds and consider the function $c$ defined by
\beqn\label{spc}
c(y,z):=\cosh(\pi y/\rho)\sin\big(\pi z/\rho)\rfa (y,z)\in[-1,1]\times[0,\rho].
\eeqn
Notice that, for this choice of $c$, \eqref{decompcond1} is satisfied and \eqref{svcond0} yields
\begin{align}
0&\leq\int_{-1}^1\int_0^{\rho}(c_y^2(y,z)+c_z^2(z))\,\text{d}z\text{d}y-\int_0^L(c^2(-1,z)+c^2(1,z))\,\text{dl}\\
&\leq \pi\sinh\big(\pi/\rho)\cosh(\pi/\rho)-\rho\cosh^2\big(\pi/\rho),
\end{align}
which is equivalent to \eqref{Sostc}.

Assume now that \eqref{Sostc} holds. It will be shown that the minimum value of $\lambda$ allowed by the eigenvalue problem \eqref{evp0} must be greater than or equal to $1$. Let $c\in W^{1,2}(\cS_0,\Real)$ be a weak solution to \eqref{evp0} with eigenvalue $\lambda$. Notice that because $c=0$ on $\cW$, $c$ admits a sine-series representation of the form
\beqn\label{cexp}
c(y,z)=\sum_{k\in\Nat}g_k(y)\sin(k\pi z/\rho)\qquad\text{for almost all}\ (y,z)\in[-1,1]\times[0,\rho],
\eeqn
where the functions $g_k$, $k\in\Nat$, are of class $W^{1,2}$ on $\cS$ and the sum converges in the associated norm.  Substituting the expansion \eqref{cexp} into \eqref{evp0} yields
\begin{align}
 \sum_{k\in\Nat}\big[g_k''(y)-\frac{k^2\pi^2}{\rho^2}g_k(y)\big]\sin(k\pi z/\rho)=0
&\qquad \text{for almost all}\ (y,z)\in[-1,1]\times[0,\rho],\\
\sum_{k\in\Nat}\big[g_k'(1)-\lambda g_k(1)\big]\sin(k\pi z\rho)=0&\rfa z\in [0,\rho],\\
\sum_{k\in\Nat}\big[g_k'(-1)+\lambda g_k(-1)\big]\sin(k\pi z/\rho)=0&\rfa z\in [0,\rho],
\end{align}
Since the family of functions $z\mapsto\sin(k\pi z/\rho)$, $k\in\Nat$, is a complete orthogonal basis for functions in $W^{1,2}([0,\rho],\Real)$ with zero Dirichlet boundary condition, for each $k\in\Nat$, $g_k$ must satisfy
\beqn\label{odemin}
\left.
\begin{array}{c}
g''_k-\frac{k^2\pi^2}{\rho^2}g_k=0\qquad\text{on}\ [-1,1],\\[6pt]
g'_k(1)-\lambda g_k(1)=0,\\[6pt]
g'_k(-1)+\lambda g_k(-1)=0.
\end{array}
\right \}
\eeqn

It is easy to show that any solution of \eqref{odemin}$_1$ must be a linear combination of hyperbolic sines and cosines and that the function that delivers the minimal value of $\lambda$ that satisfies \eqref{odemin} arises for $k=1$ and is given by
\beqn\label{g1}
g_1(y)=\cosh(\pi y/\rho)\rfa y\in[-1,1].
\eeqn
The value of $\lambda$ corresponding to $g_1$ is
\beqn
\lambda=\frac{\pi}{\rho}\tanh\big(\frac{\pi}{\rho}\big).
\eeqn
Since \eqref{Sostc} holds, it follows immediately that $\lambda \geq 1$. Moreover, since this $\lambda $ is the smallest eigenvalue of the eigenvalue problem \eqref{odemin}, Proposition \ref{propstcp2} ensures that $\cS_0$ satisfies the second-variation stability condition.
\end{proof}

The previous result embodies the requirement that $\cS_0$ satisfies the second-variation stability condition when $\rho$ satisfies \eqref{Sostc}, which can be approximated as $\rho\lessapprox 2.62$. Hence, for $2.62\lessapprox \rho$, $\cS_0$ is not a local minimum of $E$. This agrees with the results of Cox \& Jones \cite{CJ}. It should be noted that Cox \& Jones did obtain this critical value of $\rho$ by using a linear stability analysis but, as they themselves admit, they did not accurately account for the boundary condition \eqref{nuc} when they considered a perturbation of the flat surface.

It ensues that for $\pi/2<\rho$, $\cS_0$ is not a global minimizer. To see this, fix a $\phi\in(0,\pi/2)$ and consider the curve $\gamma:[0, 2\phi+L]\longrightarrow\Real^3$ given by
\beqn
\gamma(s):=\left\{
\begin{array}{cl}
(\sin s,\cos s ,0)& s\in[0,\phi),\\[4pt]
(\sin \phi ,\cos \phi ,s-\phi)&s\in[\phi,\phi+\rho),\\[4pt]
(\sin(2\phi+\rho-s),\cos(2\phi+\rho-s), \rho)&s\in[\phi+\rho,2\phi+\rho].
\end{array}
\right.
\eeqn
Let $\cY_1$ be the range of $\gamma$ and $\cY_2$ be the reflection of $\cY_1$ across the $y=0$ plane. Consider the surface $\cS_\phi$, with boundary $\cW\cup\cY_1\cup\cY_2$, consisting of three flat subsurfaces, two of which are part of discs and lie in the planes $z=0$ and $z=\rho$ and the other of which is rectangular and lies in the plane $x=\sin \phi$ (Figure 2). Since this surface is not of class $C^2$, it does not belong to $\cA$. However, it is possible to find an element of $\cA$ as close to that of $\cS_\phi$ as desired. Computing the free-energy of $\cS_\phi$ yields
\beqn
E(\cS_\phi)=\rho \cos\phi+2\phi;
\eeqn
hence, for $\rho>\pi/2$, there is a $\phi$ close to $\pi/2$ such that
\beqn
E(\cS_\phi)<E(\cS_0).
\eeqn
Thus, $\cS_0$ is not a global minimizer for $\rho>\pi/2$. 

\begin{figure*}[t]\label{localminnh}
\begin{center}
\begin{picture}(450,250)
\put(25,10)  {\includegraphics[width=2in, height=3in]{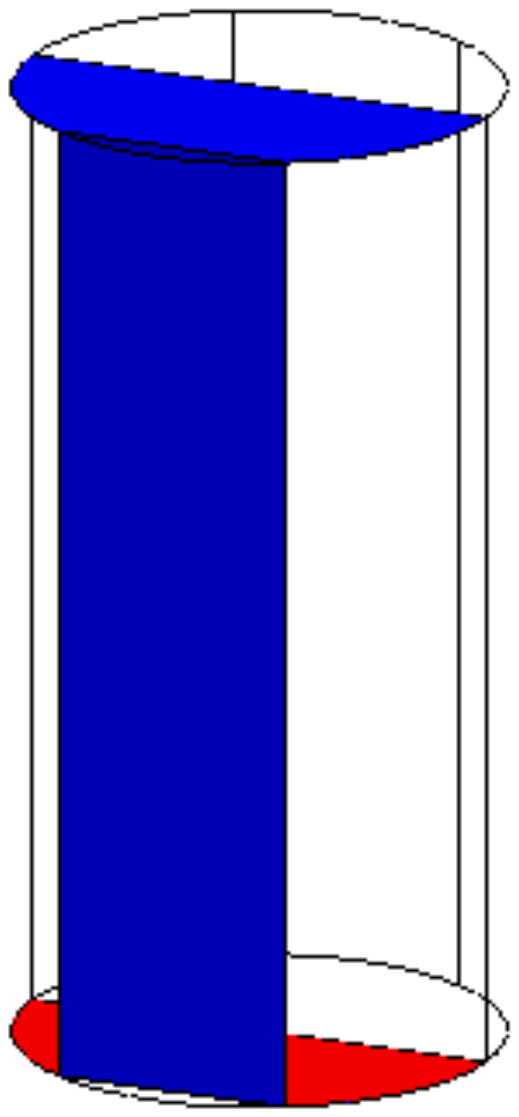}}
\put(245,10){\includegraphics[width=2in, height=3in]{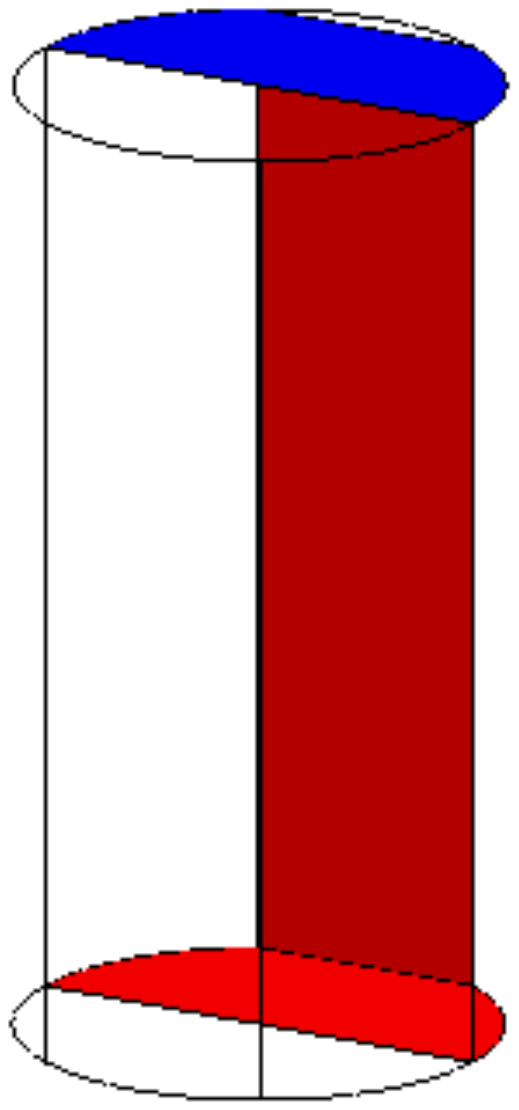}}
\put(90,-5){(a)}
\put(310,-5){(b)}
\end{picture}
\end{center}
\caption{The surface $\cS_\phi$ depicted from two different angles. Red and blue are used to indicate different sides of the surface.} 
\end{figure*}

%%%%%%%%%%%%%%%%%%%%

\section{Stability analysis of $\cS_\theta$}

This section returns to the general case $\theta_0\not=0$ and focuses on the question of whether the helicoid $\cS_\theta$ parameterized by \eqref{helicoid} satisfies the second-variation stability condition. Recall that $\theta=\theta_0+\pi n$ for some $n\in\Nat$. Due to mirror symmetry, it suffices to consider the case $\theta\geq0$. As will become evident, it is the value of $\theta$ rather than those of $\theta_0$ and $n$ that emerge naturally in the analysis. For this reason, dependence on $n$ is not indicated explicitly.

Using the parameterization \eqref{helicoid}, various calculations show that
\begin{align}
\label{parrel1}\Delta_{\cS_\theta}c(y,z)&=c_{yy}(y,z)+\frac{\theta^2 y}{\rho^2+\theta^2y^2}c_y(y,z)+\frac{\rho^2}{\rho^2+\theta^2y^2}c_{zz}(y,z),\\
K(y,z)&=-\frac{\rho^2\theta^2}{(\rho^2+\theta^2y^2)^2},\\
(\bt(z)\cdot\bz)^2&=\frac{\rho^2}{\rho^2+\theta^2},\\[4pt]
\nabla_\br c(1,z)&=c_y(1,z),\\[4pt]
\label{parrel5}\nabla_\br c(-1,z)&=-c_y(-1,z)
\end{align}
for all $y\in[-1,1]$ and $z\in[0,\rho]$. On using the relations \eqref{parrel1}--\eqref{parrel5}, the eigenvalue problem \eqref{evp} can be written as
\beqn\label{hevp}
\left.
\begin{array}{c}
\sqrt{\rho^2+\theta^2y^2}c_{yy}(y,z)+\frac{\theta^2 y}{\sqrt{\rho^2+\theta^2y^2}}c_y(y,z)+\frac{\rho^2}{\sqrt{\rho^2+\theta^2y^2}}c_{zz}(y,z)+\lambda \frac{2\rho^2\theta^2}{(\rho^2+\theta^2y^2)^{3/2}}c(y,z)=0,\\[6pt]
c_y(1,z)-\lambda(1+\rho^{-2}\theta^2)^{-1}c(1,z)=0,\\[6pt]
c_y(-1,z)+\lambda (1+\rho^{-2}\theta^2)^{-1}c(-1,z)=0,\\[6pt]
c(y,0)=c(y,\rho)=0,
\end{array}
\right\}
\eeqn
for almost all $(y,z)\in [-1,1]\times[0,\rho]$. Here, the surface $\cS_\theta$ is identified with the rectangle $[-1,1]\times[0,\rho]$ via the parameterization \eqref{helicoid}.

Just as in the proof of Proposition \ref{prop4.1}, it is possible to reduce \eqref{hevp} to an eigenvalue problem involving a function of a single variable. 

\begin{proposition}\label{prop5.1}
The helicoid $\cS_\theta$ satisfies the second-variation stability condition if and only if for all $k\in\Nat$ the smallest eigenvalue $\lambda$ of the eigenvalue problem
\beqn\label{1dhevp}
\left .
\begin{array}{c}
-(\sqrt{\rho^2+\theta^2y^2}g')'+\frac{k^2\pi^2}{\sqrt{\rho^2+\theta^2y^2}}g=\lambda \frac{2\rho^2\theta^2}{(\rho^2+\theta^2y^2)^{3/2}}g\qquad {\rm on}\ [-1,1],\\[8pt]
g'(1)-\lambda(1+\rho^{-2}\theta^2)^{-1}g(1)=0,\\[6pt]
g'(-1)+\lambda (1+\rho^{-2}\theta^2)^{-1}g(-1)=0,
\end{array}
\right\}
\eeqn
is greater than or equal to $1$, where weak solutions to \eqref{1dhevp} are sought in $W^{1,2}([-1,1],\Real)$.
\end{proposition}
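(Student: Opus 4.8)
The plan is to mirror the reduction carried out for the flat case in the proof of Proposition~\ref{prop4.1}, but to organize the argument around the Rayleigh quotient rather than the partial-differential equation directly, since here the eigenvalue $\lambda$ enters both the bulk equation and the two Robin boundary conditions. By Proposition~\ref{propstcp2} together with the equivalent reformulation \eqref{eqineq}, the helicoid $\cS_\theta$ satisfies the second-variation stability condition if and only if the infimum of the Rayleigh quotient
\[
J(c)=\frac{\int_{\cS_\theta}\abs{\nabla_{\cS_\theta}c}^2\da}{\int_\cY(\bt\cdot\bz)^2c^2\dl-\int_{\cS_\theta}2Kc^2\da}
\]
over admissible $c$ is at least $1$. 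First I would use the parameterization \eqref{helicoid} together with \eqref{parrel1}--\eqref{parrel5} and the induced area and arc-length elements to rewrite $J(c)$ as an explicit quotient of integrals over the rectangle $[-1,1]\times[0,\rho]$.

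Second, since every admissible $c$ vanishes on $\cW$, that is at $z=0$ and $z=\rho$, I would insert a sine expansion of the form \eqref{cexp}. Using the orthogonality of the system $\{\sin(k\pi z/\rho)\}_{k\in\Nat}$, together with that of the cosines arising from $c_z$, both the numerator and the denominator of $J(c)$ diagonalize into sums over $k$ of one-dimensional quadratic forms in the profiles $g_k$; schematically $J(c)=\bigl(\sum_k N_k(g_k)\bigr)/\bigl(\sum_k D(g_k)\bigr)$, where $N_k$ and $D$ are precisely the numerator and denominator whose Euler--Lagrange equation is \eqref{1dhevp}. The denominator form $D$ turns out to be independent of $k$, while $N_k$ grows with $k^2$.

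Third comes the conceptual step that converts a statement about the two-dimensional problem into a statement about the family \eqref{1dhevp}. Because $H=0$ forces $K\leq0$, and $\theta>0$ makes $K<0$ strictly, the curvature weight $-2K$ is positive; together with $(\bt\cdot\bz)^2\geq0$ this gives $D(g_k)\geq0$, with $D(g_k)=0$ only when $g_k\equiv0$. The elementary mediant inequality, which exhibits $J(c)$ as a weighted average of the one-dimensional quotients $N_k(g_k)/D(g_k)$ with nonnegative weights $D(g_k)$, then yields $J(c)\geq\min_k N_k(g_k)/D(g_k)\geq\min_k\lambda_k$, where $\lambda_k$ denotes the least value of the one-dimensional quotient $N_k/D$; conversely the single-mode test functions $c=g(y)\sin(k\pi z/\rho)$ show that this bound is sharp. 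Hence $\min_c J(c)=\min_k\lambda_k$, so $\min_c J(c)\geq1$ if and only if $\lambda_k\geq1$ for every $k$. Finally, that each $\lambda_k$ is attained and coincides with the smallest eigenvalue of \eqref{1dhevp} follows by repeating, in one variable, the direct-method and first-variation argument from the proof of Proposition~\ref{propstcp2}: Banach--Alaoglu, a compact trace embedding, and weak lower semicontinuity give existence of a minimizer, and its first variation produces the equation and the two Robin conditions in \eqref{1dhevp}.

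I expect the main obstacle to be the rigorous diagonalization in the second step, namely justifying that the sine series converges strongly enough in $W^{1,2}([-1,1]\times[0,\rho],\Real)$ for the numerator, and in particular for the boundary trace term $\int_\cY(\bt\cdot\bz)^2c^2\dl$, to pass mode-by-mode to the sum. This requires the completeness and orthogonality of the sine system supplemented by a trace estimate controlling the boundary integral, exactly the ingredients already used in the proof of Proposition~\ref{propstcp2}. By comparison the mediant step is elementary once $D(g_k)\geq0$ is established, so it is really the sign $K\leq0$ and the orthogonality of the trigonometric system that carry the argument.
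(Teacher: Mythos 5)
Your proposal is correct and rests on the same essential reduction as the paper's proof --- a sine expansion \eqref{cexp} in $z$ exploiting completeness and orthogonality of $\{\sin(k\pi z/\rho)\}_{k\in\Nat}$ to collapse the two-dimensional problem on $[-1,1]\times[0,\rho]$ to the one-parameter family \eqref{1dhevp} --- but you organize it at the level of the Rayleigh quotient rather than the eigenvalue equation. The paper substitutes the series directly into \eqref{hevp} and its boundary conditions, reads off that each profile $g_k$ must solve \eqref{1dhevp}, and notes conversely that single modes $g(y)\sin(k\pi z/\rho)$ solve \eqref{hevp}, concluding that the smallest eigenvalues coincide; the explicit quotient you diagonalize only appears in the paper in the \emph{next} proposition, as \eqref{mink}, where it is used to show that $k=1$ minimizes. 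Your mediant-inequality argument effectively folds that computation into the proof of Proposition \ref{prop5.1} itself, and it buys something real: it bypasses the delicate question of what it means to substitute a $W^{1,2}$ sine series termwise into a second-order equation (the paper does this formally), replacing it with termwise identities for quadratic forms, which is exactly the setting where orthogonality is unproblematic. Two small points of care. First, your claim that $D(g_k)=0$ forces $g_k\equiv 0$ uses $K<0$, which holds only for $\theta\neq 0$; when $\theta=0$ the weight $-2K$ vanishes and $D(g_k)=0$ merely means $g_k(\pm1)=0$, but since then $N_k(g_k)\geq 0$ such modes can be discarded from the mediant bound without harm, so this is an imprecision rather than a gap. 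Second, your final step --- that the infimum of $N_k/D$ is attained and equals the smallest eigenvalue of \eqref{1dhevp} --- does require rerunning the direct-method and first-variation argument of Proposition \ref{propstcp2} in one variable, as you say; the paper instead leans on Proposition \ref{propstcp2} having already identified minimizers of the quotient with weak eigenfunctions, so both routes end at the same place.
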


\begin{proof}
By Proposition \ref{propstcp2}, the second-variation stability condition is equivalent to the eigenvalue problem \eqref{evp}, which takes the form \eqref{hevp} if the surface is the helicoid $\cS_\theta$. Notice that every element $c$ in $W^{1,2}(\cS_\theta,\Real)$ satisfying $c=0$ on $\cW$ can be expressed as
\beqn
c(y,z)=\sum_{k\in\Nat}g_k(y)\sin(k\pi z/\rho)\qquad\text{for almost all}\ (y,z)\in[-1,1]\times[0,\rho],
\eeqn
where the functions $g_k$, $k\in\Nat$, are of class $W^{1,2}$ on $\cS_\theta$ and the sum converges in the associated norm. Substituting this expression for $c$ into \eqref{hevp} yields
\beqn
\left .
\begin{array}{l}
\sum\limits_{k\in\Nat}\big[ \sqrt{\rho^2+\theta^2y^2}g''_k(y)+\frac{\theta^2 y}{\sqrt{\rho^2+\theta^2y^2}}g_k'(y)-\frac{k^2 \pi^2}{\sqrt{\rho^2+\theta^2y^2}}g_k(y)\\
\qquad\qquad+\lambda \frac{2\rho^2\theta^2}{(\rho^2+\theta^2y^2)^{3/2}}g_k(y)\big]\sin(k\pi z/\rho)=0,\\[8pt]
\sum\limits_{k\in\Nat}\big[g'_k(1)-\lambda(1+\rho^{-2}\theta^2)^{-1}g_k(1)\big]\sin(k\pi z/\rho)=0,\\[8pt]
\sum\limits_{k\in\Nat}\big[g'_k(-1)+\lambda(1+\rho^{-2}\theta^2)^{-1}g_k(-1)\big]\sin(k\pi z/\rho)=0,
\end{array}
\right\}
\eeqn
for almost all $(y,z)\in[-1,1]\times[0,\rho]$.

Since the family of functions $z\mapsto\sin(k\pi z/\rho)$, $k\in\Nat$, forms a complete orthogonal basis for those functions in $W^{1,2}([0,\rho],\Real)$ with zero Dirichlet boundary condition, $g_k$ must obey \eqref{1dhevp} for each $k\in\Nat$. Hence, a solution to \eqref{hevp} with eigenvalue $\lambda$ induces a solution to \eqref{1dhevp} with eigenvalue $\lambda$ for each $k\in\Nat$.

Conversely, given a solution $g$ of \eqref{1dhevp} with eigenvalue $\lambda$ for some $k\in\Nat$, it is easy to show that the function
\beqn
c(y,z)=g(y)\sin(k\pi z/\rho)\qquad \rfaa (y,z)\in[-1,1]\times[0,\rho]
\eeqn
is a solution to \eqref{hevp} with eigenvalue $\lambda$. It follows that the smallest eigenvalue of \eqref{hevp} is equal to the smallest eigenvalue of \eqref{1dhevp}. This establishes the result.
\end{proof}

Notice that \eqref{1dhevp} has a structure similar to that of a Sturm--Liouville problem except that here the eigenvalue appears in both the differential equation and the boundary condition. Moreover, standard boot-strap type techniques can be used to show that any weak solution of \eqref{1dhevp} must be smooth.

From Proposition \ref{prop5.1}, it appears that an infinite number of eigenvalue problems must be considered, one for each $k\in\Nat$. It transpires that considering the problem for $k=1$ suffices. 

\begin{proposition}
The smallest eigenvalue of \eqref{1dhevp} occurs for $k=1$.
\end{proposition}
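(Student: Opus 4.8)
The plan is to characterize the smallest eigenvalue $\lambda(k)$ of \eqref{1dhevp} variationally, as the minimum of a Rayleigh quotient, and then to exploit the fact that the only $k$-dependence resides in a single, manifestly nonnegative term in the numerator. First I would derive the weak form of \eqref{1dhevp}: multiplying the differential equation by a test function $\phi\in W^{1,2}([-1,1],\Real)$, integrating over $[-1,1]$, integrating by parts, and inserting \eqref{1dhevp}$_2$ and \eqref{1dhevp}$_3$ to eliminate the boundary terms $\sqrt{\rho^2+\theta^2}\,g'(\pm1)$, one finds that $g$ is a weak solution with eigenvalue $\lambda$ precisely when
\beqn
a_k(g,\phi)=\lambda\, b(g,\phi)\rfa \phi\in W^{1,2}([-1,1],\Real),
\eeqn
where
\begin{align*}
a_k(g,\phi)&:=\int_{-1}^1\Big(\sqrt{\rho^2+\theta^2y^2}\,g'\phi'+\frac{k^2\pi^2}{\sqrt{\rho^2+\theta^2y^2}}\,g\phi\Big)\,\text{d}y,\\
b(g,\phi)&:=\int_{-1}^1\frac{2\rho^2\theta^2}{(\rho^2+\theta^2y^2)^{3/2}}\,g\phi\,\text{d}y+\frac{\rho^2}{\sqrt{\rho^2+\theta^2}}\big(g(1)\phi(1)+g(-1)\phi(-1)\big).
\end{align*}
Crucially, $b$ is independent of $k$, and since $\theta\neq0$ for a genuine helicoid both contributions to $b(g,g)$ are nonnegative, with $b(g,g)>0$ whenever $g\not\equiv0$; this is the one-dimensional manifestation of the positivity of \eqref{nonzero} noted for minimal surfaces in the proof of Proposition \ref{propstcp2}. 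Since $a_k$ and $b$ are symmetric and positive definite on $W^{1,2}([-1,1],\Real)$, the standard variational theory of symmetric generalized eigenvalue problems yields the Rayleigh-quotient characterization
\beqn
\lambda(k)=\min_{g\not\equiv0}\frac{a_k(g,g)}{b(g,g)}.
\eeqn

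The decisive step is then a monotonicity in $k$. Writing $A(g):=\int_{-1}^1\sqrt{\rho^2+\theta^2y^2}\,(g')^2\,\text{d}y$ and $B(g):=\int_{-1}^1(\rho^2+\theta^2y^2)^{-1/2}g^2\,\text{d}y\geq0$, one has $a_k(g,g)=A(g)+k^2\pi^2B(g)$, so for every fixed $g\not\equiv0$ and every $k\in\Nat$,
\begin{align*}
\frac{a_k(g,g)}{b(g,g)}&=\frac{A(g)+k^2\pi^2B(g)}{b(g,g)}\geq\frac{A(g)+\pi^2B(g)}{b(g,g)}\\
&=\frac{a_1(g,g)}{b(g,g)}\geq\lambda(1).
\end{align*}
Taking the infimum over $g$ on the left gives $\lambda(k)\geq\lambda(1)$ for every $k\in\Nat$, which is exactly the assertion that the smallest eigenvalue of \eqref{1dhevp} occurs for $k=1$.

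I expect the main obstacle to lie in the rigorous justification of the Rayleigh-quotient characterization, rather than in the monotonicity, which is immediate once the quotient is available. The delicate points are: (i) confirming that $b(g,g)>0$ for all $g\not\equiv0$, which makes the quotient well-defined and finite and relies on $\theta\neq0$; and (ii) producing a minimizer, which can be handled exactly as in the proof of Proposition \ref{propstcp2}. For the latter, a minimizing sequence normalized in $W^{1,2}([-1,1],\Real)$ is bounded and hence, after passing to a subsequence, converges weakly in $W^{1,2}$ and strongly in $L^2([-1,1],\Real)$ as well as at the endpoints, by the one-dimensional Sobolev embedding; weak lower semicontinuity of $A$ together with continuity of $B$ and of the boundary form then delivers a minimizer attaining the value $\lambda(k)$. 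With these in place, the comparison displayed above completes the argument with no further computation.
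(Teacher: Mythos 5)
Your proposal is correct and follows essentially the same route as the paper: multiply \eqref{1dhevp}$_1$ by $g$, integrate by parts using the boundary conditions to obtain the Rayleigh-quotient characterization \eqref{mink}, and observe that the only $k$-dependence is the nonnegative term $k^2\pi^2\int_{-1}^1(\rho^2+\theta^2y^2)^{-1/2}g^2\,\text{d}y$ in the numerator, so the quotient (and hence its infimum) is nondecreasing in $k$. Your write-up is simply more explicit than the paper's about the positivity of the denominator and the attainment of the infimum.
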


\begin{proof}
Start by multiplying \eqref{1dhevp}$_1$ by $g$, then integrate over $[-1,1]$, use integration by parts, and \eqref{1dhevp}$_{2,3}$ to obtain
\begin{align}
\int_{-1}^1[\sqrt{\rho^2+\theta^2y^2}g'(y)^2+&\frac{k^2\pi^2}{\sqrt{\rho^2+\theta^2y^2}}g(y)^2]\, \text{d}y\\
&=\lambda \Big[ \frac{\rho^2}{\sqrt{\rho^2+\theta^2}}(g^2(1)+g^2(-1))+\int_{-1}^{1}\frac{2\rho^2\theta^2}{(\rho^2+\theta^2y^2)^{3/2}}g(y)^2\,\text{d}y\Big].
\end{align}
It follows that smallest eigenvalue $\lambda$ of the eigenvalue problem \eqref{1dhevp} is given by
\beqn\label{mink}
\lambda=\inf_{g\in W^{1,2}([-1,1],\Real)}\frac{\int_{-1}^1[\sqrt{\rho^2+\theta^2y^2}g'(y)^2+\frac{k^2\pi^2}{\sqrt{\rho^2+\theta^2y^2}}g(y)^2]\, \text{d}y}{\frac{\rho^2}{\sqrt{\rho^2+\theta^2}}(g(1)^2+g(-1)^2)+\int_{-1}^{1}\frac{2\rho^2\theta^2}{(\rho^2+\theta^2y^2)^{3/2}}g(y)^2\,\text{d}y}.
\eeqn
It is clear from \eqref{mink} that $\lambda$ increases with increasing $k$. It follows that the minimum eigenvalue of \eqref{1dhevp} occurs for $k=1$.
\end{proof}

Consider the set
\beqn
\cK:=\{g\in W^{2,2}([-1,1])\ \big |\ \norm{g}_{W^{2,2}}=1\}
\eeqn
endowed with a topology by the $W^{1,2}$ norm. With this topology, $\cK$ is compact since $W^{2,2}([-1,1],\Real)$ is compactly embedded in $W^{1,2}([-1,1],\Real)$. Consider the mapping
\beqn
\bar\lambda:\cK\times(0,\infty)\times[0,\infty)\longrightarrow\Real
\eeqn
defined by
\beqn\label{barlambda}
\bar\lambda(g,\rho,\theta):=\frac{\int_{-1}^1[\sqrt{\rho^2+\theta^2y^2}g'(y)^2+\frac{\pi^2}{\sqrt{\rho^2+\theta^2y^2}}g(y)^2]\, \text{d}y}{\frac{\rho^2}{\sqrt{\rho^2+\theta^2}}(g(1)^2+g(-1)^2)+\int_{-1}^{1}\frac{2\rho^2\theta^2}{(\rho^2+\theta^2y^2)^{3/2}}g(y)^2\,\text{d}y}
\eeqn
for all $(g,\rho,\theta)\in\cK\times(0,\infty)\times[0,\infty)$. Let $\hat\lambda(\rho,\theta)$ be the smallest eigenvalue of the eigenvalue problem \eqref{1dhevp} as a function of $\rho$ and $\theta$. Since $W^{2,2}([-1,1],\Real)$ is dense in $W^{1,2}([-1,1],\Real)$, it follows from \eqref{mink} (with $k=1$) that 
\beqn\label{hatldef}
\hat\lambda(\rho,\theta)=\inf_{g\in\cK} \bar\lambda(g,\rho,\theta)\rfa (\rho,\theta)\in(0,\infty)\times[0,\infty).
\eeqn
It is straightforward to see that $\bar\lambda$ is continuous. A basic result from topology is that since $\cK$ is compact and $\bar\lambda$ is continuous, $\hat\lambda$ is also continuous.

It follows from the proof of Proposition \eqref{prop4.1} that
\beqn\label{loe}
\hat\lambda(\rho,0)=\frac{\pi}{\rho}\tanh\big(\frac{\pi}{\rho}\big)\rfa \rho\in(0,\infty).
\eeqn

The goal is to determine conditions under which $\hat\lambda$ is greater than $1$ or less than $1$ since, by Proposition \ref{prop5.1}, this determines when the surface $\cS_\theta$ is stable or unstable. To obtain a lower bound on $\hat\lambda$, the following two lemmas are useful.

\begin{lemma}\label{lemeq}
Fix $A,B,C>0$, and $D\geq 0$ and consider the minimization problem
\beqn\label{leminf}
\lambda:=\inf_{g\in W^{1,2}([-1,1],\Real)}\frac{\int_{-1}^1[Ag'(y)^2+Bg(y)^2]\, \text{d}y}{C(g(1)^2+g(-1)^2)+\int_{-1}^{1}Dg(y)^2\,\text{d}y}.
\eeqn
The infimum $\lambda$ satisfies
\begin{align}
\label{gen2}\lambda=&\frac{\sqrt{A}}{C}\sqrt{B-\lambda D}\tanh\sqrt{\frac{B-\lambda D}{A}}.
\end{align}
\end{lemma}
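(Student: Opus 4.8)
The plan is to treat \eqref{leminf} as a Rayleigh-quotient minimization whose Euler--Lagrange equation is a constant-coefficient Sturm--Liouville problem, solve that ODE explicitly, and then extract the transcendental relation \eqref{gen2} from the boundary conditions. First I would observe that, since $A,B,C>0$ and $D\geq 0$, the quotient is bounded below and attains its infimum (by the same direct-method argument used in Proposition \ref{propstcp2}: a minimizing sequence normalized in $W^{1,2}$ converges weakly, the numerator is weakly lower semicontinuous, and the denominator passes to the limit via the compact trace and $L^2$ embeddings). Writing $\lambda$ for this minimum value, the vanishing of the first variation of the quotient at a minimizer $g$ gives the weak form
\beqn\label{planweak}
\int_{-1}^1(Ag'\xi'+Bg\xi)\dl=\lambda\Big[C(g(1)\xi(1)+g(-1)\xi(-1))+\int_{-1}^1Dg\xi\dl\Big]
\eeqn
for all test functions $\xi$, which is exactly the weak form of the strong problem
\beqn\label{planode}
-Ag''+(B-\lambda D)g=0\ \text{on}\ [-1,1],\qquad Ag'(1)=\lambda Cg(1),\qquad Ag'(-1)=-\lambda Cg(-1).
\eeqn

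Next I would solve the ODE \eqref{planode}$_1$. Because the minimizer of a Rayleigh quotient of this type is the lowest mode, I expect it to be even in $y$; the symmetry of the functional under $y\mapsto-y$ makes this precise, so one may restrict to even solutions. The key quantity is the sign of $B-\lambda D$: writing $\omega:=\sqrt{(B-\lambda D)/A}$, the even solution is $g(y)=\cosh(\omega y)$ when $B-\lambda D>0$. Substituting into the boundary condition \eqref{planode}$_2$, namely $Ag'(1)=\lambda Cg(1)$, gives $A\omega\sinh\omega=\lambda C\cosh\omega$, i.e.
\beqn\label{plantrans}
\lambda=\frac{A\omega}{C}\tanh\omega=\frac{A}{C}\sqrt{\frac{B-\lambda D}{A}}\tanh\sqrt{\frac{B-\lambda D}{A}}=\frac{\sqrt{A}}{C}\sqrt{B-\lambda D}\,\tanh\sqrt{\frac{B-\lambda D}{A}},
\eeqn
which is precisely \eqref{gen2}. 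The case $\theta=0$ recovers \eqref{loe} with $A=\rho$, $B=\pi^2$, $C=\rho^2/\sqrt{\rho^2}=\rho$, $D=0$, consistent with \eqref{Sostc}.

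The main obstacle is justifying that the infimum is realized by the even $\cosh$ mode rather than by an odd solution or by a solution in the regime $B-\lambda D\leq 0$. I would rule out the latter by noting that if $B-\lambda D\leq 0$ the ODE has oscillatory or affine solutions whose boundary conditions force $\lambda$ to be comparable to or larger than $B/C$-type quantities, so the genuine minimum sits in the $B-\lambda D>0$ branch; a cleaner route is to show directly that the right-hand side of \eqref{gen2}, viewed as a function of $\lambda$ through $\omega(\lambda)$, is strictly decreasing in $\lambda$ while the left-hand side is strictly increasing, so the fixed-point equation \eqref{gen2} has a unique root, which must therefore be the minimum. For the parity issue, one compares the Rayleigh quotient of $g=\cosh(\omega y)$ against that of the odd mode $g=\sinh$, using that $\tanh t<t$ and that replacing $\cosh$ by $\sinh$ raises the numerator relative to the denominator; this confirms that the even mode gives the smaller value and hence that \eqref{gen2} characterizes $\lambda$. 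The remaining steps — verifying the trace inequality needed for existence and carrying out the integration by parts in \eqref{planweak} — are routine and parallel to the arguments already given for Propositions \ref{propstcp2} and \ref{prop5.1}.
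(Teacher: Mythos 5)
Your overall strategy coincides with the paper's: establish existence of a minimizer of \eqref{leminf} by the direct method, derive the Euler--Lagrange system \eqref{lemevp}, solve the constant-coefficient ODE explicitly, and extract \eqref{gen2} from the Robin boundary conditions. The computation on the hyperbolic branch, giving $\lambda=\tfrac{A}{C}\omega\tanh\omega$ with $\omega=\sqrt{(B-\lambda D)/A}$, is exactly the paper's, and your comparison of the even ($\cosh$, hence $\tanh$) mode against the odd ($\sinh$, hence $\coth$) mode matches the paper's observation that $\tanh\mu\le\coth\mu$.

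The one place where your argument has a genuine gap is the exclusion of the regime $B-\lambda D<0$. Your first suggestion --- that oscillatory solutions ``force $\lambda$ to be comparable to or larger than $B/C$-type quantities'' --- points in the wrong direction. Writing $\mu=\sqrt{(\lambda D-B)/A}$, the boundary conditions applied to the $\cos$ and $\sin$ modes yield the two candidate eigenvalues $\tfrac{A}{C}\mu\tan\mu$ and $-\tfrac{A}{C}\mu\cot\mu$, whose product is $-(A\mu/C)^2\le 0$; hence the smaller of the two is nonpositive, not large. This is precisely the paper's argument: a nonpositive $\lambda$ together with $B>0$ and $D\ge 0$ forces $B-\lambda D\ge B>0$, contradicting the standing assumption, so the minimum must lie on the hyperbolic branch. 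Your second suggestion --- uniqueness of the root of \eqref{gen2} via the monotonicity of the two sides in $\lambda$ --- is true but does not close the gap either: it shows that the fixed-point equation has exactly one solution, not that the variational minimum actually solves that equation rather than arising from the oscillatory branch. With the sign observation above inserted, your proof closes and is essentially the paper's; as written, that step fails.
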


\begin{proof}
Using an argument similar to that in the proof of Proposition \ref{propstcp2} shows that \eqref{leminf} has a minimizer that satisfies
\beqn\label{lemevp}
\left .
\begin{array}{c}
Ag''-(B-\lambda D)g=0\qquad {\rm on}\ [-1,1],\\[6pt]
Ag'(1)-\lambda Cg(1)=0,\\[6pt]
Ag'(-1)+\lambda Cg(-1)=0,
\end{array}
\right\}
\eeqn

The parameters $B$, $D$, and $\lambda$ may satisfy either $B-\lambda D\ge0$ or $B-\lambda D<0$. If the latter alternative holds, the solution of \eqref{lemevp}$_1$ is easily shown to be of the form
\beqn
g(y):=E \cos(\mu y)+F\sin(\mu y)\rfa y\in [-1,1],
\eeqn
where $\mu:=\sqrt{\lambda D-B}$. The boundary conditions \eqref{lemevp}$_{2,3}$ yield
\begin{align}
E=0\qquad& \text{or}\qquad \lambda=\frac{A}{C}\mu\tan\mu,\\
\displaystyle
F=0\qquad& \text{or}\qquad \lambda=-\frac{A}{C}\mu\cot\mu.
\end{align}
Since $\lambda$ is the smallest eigenvalue,
\beqn
\lambda=\min\{-\frac{A}{C}\mu\cot\mu, \frac{A}{C}\mu\tan\mu\}\leq 0.
\eeqn
However, $\lambda\leq0$ is inconsistent with $B-\lambda D< 0$ and $B,C>0$. Hence, the inequality 
\beqn\label{gen1}
B-\lambda D\geq 0
\eeqn
must hold. Under this condition, the solution of \eqref{lemevp}$_1$ is of the form
\beqn
g(y):=E \cosh(\mu y)+F\sinh(\mu y)\rfa y\in [-1,1],
\eeqn
with $\mu:=\sqrt{B-\lambda D}$. The boundary conditions \eqref{lemevp}$_{2,3}$ yield
\begin{align}
E=0\qquad& \text{or}\qquad \lambda=\frac{A}{C}\mu\tanh\mu,\\
F=0\qquad& \text{or}\qquad \lambda=\frac{A}{C}\mu\coth\mu.
\end{align}
Since $\tanh\mu\leq\coth\mu$, the smallest eigenvalue is
\beqn
\lambda=\frac{A}{C}\mu\tanh\mu,
\eeqn
which is \eqref{gen2}. Notice that the right-hand side \eqref{gen2} is undefined when \eqref{gen1} fails to hold. Thus, it suffices to ensure that \eqref{gen2} holds.
\end{proof}

The second lemma, stated below, is easily establish and, hence, the proof will be omitted. It is stated so it can be referred to later.

\begin{lemma}\label{lemeq2}
Fix $A,B,\alpha,\beta>0$, and $C_L$, $C_U$, and $C$ such that $0\leq C_L\leq C\leq C_U$. Then,
\beqn
\frac{A+\alpha C}{B+\beta C}\geq\min\Big\{\frac{A+\alpha C_L}{B+\beta C_L},\frac{A+\alpha C_U}{B+\beta C_U}\Big\}.
\eeqn
\end{lemma}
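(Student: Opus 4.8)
The plan is to recognize the quantity $C\mapsto (A+\alpha C)/(B+\beta C)$ as a fractional linear function of the single real variable $C$ and to exploit its monotonicity on $[0,\infty)$. First I would set $f(C):=(A+\alpha C)/(B+\beta C)$ and note that, because $A,B,\alpha,\beta>0$ and $C\geq 0$, the denominator $B+\beta C$ is strictly positive on all of $[0,\infty)$; hence $f$ is a well-defined smooth function there, with no risk of a vanishing denominator anywhere in the range of interest.

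The key observation is that $f$ is monotone on $[0,\infty)$. This can be seen either by computing
\[
f'(C)=\frac{\alpha B-\beta A}{(B+\beta C)^2},
\]
whose sign is independent of $C$, or, avoiding differentiation entirely, by the algebraic rearrangement
\[
f(C)=\frac{\alpha}{\beta}+\frac{\beta A-\alpha B}{\beta(B+\beta C)},
\]
in which the second term is monotone in $C$ because $1/(B+\beta C)$ is strictly decreasing. In either form one reads off that $f$ is non-decreasing when $\alpha B\geq\beta A$, non-increasing when $\alpha B\leq\beta A$, and constant in the borderline case $\alpha B=\beta A$. The point is simply that the sign of $\alpha B-\beta A$ never changes as $C$ varies.

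With monotonicity established, the conclusion is immediate. Since $[C_L,C_U]\subset[0,\infty)$ and $C\in[C_L,C_U]$, a monotone function takes at $C$ a value lying between its two endpoint values $f(C_L)$ and $f(C_U)$, regardless of which way it is monotone; in particular $f(C)\geq\min\{f(C_L),f(C_U)\}$, which is precisely the asserted inequality.

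Honestly there is no substantive obstacle here: the entire content is the constancy of the sign of $\alpha B-\beta A$, which forces the fractional linear map to be monotone, and the stated inequality is just the resulting betweenness on the subinterval $[C_L,C_U]$. The only point worth a moment's care is the degenerate case $\alpha B=\beta A$, in which $f$ is constant and both sides of the claimed inequality coincide; this is automatically subsumed by the monotonicity argument, since a constant function trivially satisfies the betweenness property.
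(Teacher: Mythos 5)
Your argument is correct: the map $C\mapsto (A+\alpha C)/(B+\beta C)$ is a fractional linear function with constant-sign derivative $(\alpha B-\beta A)/(B+\beta C)^2$ on $[0,\infty)$, hence monotone there, and a monotone function on $[C_L,C_U]$ is bounded below by the smaller of its endpoint values. The paper omits the proof of this lemma as ``easily established,'' and your monotonicity argument is precisely the elementary verification intended; nothing is missing.
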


The following result gives a condition for stability.

\begin{proposition}\label{proplow}
The surface $\cS_\theta$ is stable if
\beqn\label{lowcond}
1<\sqrt{\frac{\pi^2-2\theta^2}{\rho^2}}\tanh\sqrt{\frac{\pi^2-2\theta^2}{\rho^2}}.
\eeqn
\end{proposition}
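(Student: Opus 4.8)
The plan is to obtain a lower bound on $\hat\lambda(\rho,\theta)$ by bounding below the Rayleigh quotient \eqref{barlambda} (with $k=1$), replacing the $y$-dependent coefficients by cleverly chosen constants so that the resulting constant-coefficient problem is exactly the one solved by Lemma \ref{lemeq}. Concretely, for $y\in[-1,1]$ one has $\rho^2\leq\rho^2+\theta^2y^2\leq\rho^2+\theta^2$, and this monotonicity lets me bound each of the four ingredients of $\bar\lambda$. The numerator's first term $\sqrt{\rho^2+\theta^2y^2}\,g'^2$ is bounded below by $\rho\,g'^2$, suggesting the choice $A=\rho$; the numerator's second term $\frac{\pi^2}{\sqrt{\rho^2+\theta^2y^2}}g^2$ is bounded below by $\frac{\pi^2}{\sqrt{\rho^2+\theta^2}}g^2$. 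In the denominator, the boundary coefficient $\frac{\rho^2}{\sqrt{\rho^2+\theta^2}}$ is already constant (so $C=\rho^2/\sqrt{\rho^2+\theta^2}$), while the volume term $\frac{2\rho^2\theta^2}{(\rho^2+\theta^2y^2)^{3/2}}g^2$ must be bounded \emph{above} to keep the quotient small, using $(\rho^2+\theta^2y^2)^{3/2}\geq\rho^3$, giving $D=2\theta^2/\rho$.

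The subtlety is that the eigenvalue $\lambda$ itself multiplies the denominator, so one cannot simply compare Rayleigh quotients term by term: bounding the denominator above makes the quotient larger, which is the wrong direction. I would handle this exactly as in Lemma \ref{lemeq}, where the characterization \eqref{gen2} is an implicit equation for $\lambda$ rather than a direct minimization of a fixed quotient. The clean way is to show that the constant-coefficient problem with $A=\rho$, $B=\pi^2/\sqrt{\rho^2+\theta^2}$, $C=\rho^2/\sqrt{\rho^2+\theta^2}$, $D=2\theta^2/\rho$ has smallest eigenvalue $\lambda_*$ bounded below by the right-hand side of \eqref{lowcond}, and then to argue that $\hat\lambda(\rho,\theta)\geq\lambda_*$. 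For the latter comparison I would invoke Lemma \ref{lemeq2}: writing the true Rayleigh quotient and the comparison quotient both in the form $(A_0+\alpha C)/(B_0+\beta C)$ where $C$ ranges over the admissible coefficient values, the lemma guarantees the quotient dominates the minimum over the endpoint constant coefficients, which is arranged to be the constant-coefficient eigenvalue $\lambda_*$.

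Having reduced to the constant-coefficient case, Lemma \ref{lemeq} gives $\lambda_*=\frac{\sqrt{A}}{C}\sqrt{B-\lambda_* D}\tanh\sqrt{\frac{B-\lambda_* D}{A}}$. Substituting the chosen constants, $\frac{\sqrt{A}}{C}=\frac{\sqrt{\rho}\sqrt{\rho^2+\theta^2}}{\rho^2}$ and $\frac{B-\lambda_* D}{A}=\frac{1}{\rho}\bigl(\frac{\pi^2}{\sqrt{\rho^2+\theta^2}}-\lambda_*\frac{2\theta^2}{\rho}\bigr)$, yields an implicit equation. Since the map $s\mapsto s\tanh s$ is increasing, and since for a \emph{stability} conclusion I only need $\hat\lambda\geq 1$, I would set $\lambda_*=1$ on the right-hand side to obtain a sufficient condition: if the value of $\frac{\sqrt{A}}{C}\sqrt{B-D}\tanh\sqrt{\frac{B-D}{A}}$ already exceeds $1$, then $\lambda_*>1$. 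A short computation collapses $B-D$ and the prefactor into the expression $\sqrt{(\pi^2-2\theta^2)/\rho^2}$, giving precisely \eqref{lowcond}; then Proposition \ref{propstcp2} (via the strict inequality \eqref{stsvarseq}) delivers stability.

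The main obstacle I anticipate is the bookkeeping around the eigenvalue appearing on both sides: one must verify that the monotonicity of $s\mapsto s\tanh s$ together with the sign of the $\lambda D$ term lets the substitution $\lambda_*\mapsto 1$ go in the correct direction, and that the implicit comparison via Lemma \ref{lemeq2} is applied with the coefficient $C$-dependence isolated correctly in both numerator and denominator. Verifying that $\pi^2-2\theta^2>0$ is implicitly required (otherwise the square roots are imaginary and the condition is vacuous), and matching the algebra so that the three disparate constants fuse into the single clean quantity $(\pi^2-2\theta^2)/\rho^2$ is the step most prone to error, though it is ultimately routine.
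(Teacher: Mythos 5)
Your overall strategy---lower-bound the Rayleigh quotient \eqref{barlambda} by a constant-coefficient one, solve that with Lemma \ref{lemeq}, and read off the threshold by setting $\lambda=1$ in the implicit equation \eqref{gen2}---is the right one and matches the paper in spirit. But the specific constants you extract do not yield \eqref{lowcond}, and the ``short computation'' you defer to does not go through. With your choices $A=\rho$, $B=\pi^2/\sqrt{\rho^2+\theta^2}$, $C=\rho^2/\sqrt{\rho^2+\theta^2}$, $D=2\theta^2/\rho$, one finds $(B-\lambda D)/A=\pi^2/(\rho\sqrt{\rho^2+\theta^2})-2\lambda\theta^2/\rho^2$ and $\sqrt{A}/C=\sqrt{\rho}\sqrt{\rho^2+\theta^2}/\rho^2$; these collapse to $\sqrt{(\pi^2-2\lambda\theta^2)/\rho^2}$ only when $\theta=0$ (indeed $A(B-\lambda D)/C^2=(B-\lambda D)/A$ forces $\rho^2+\theta^2=\rho^2$, so for $\theta>0$ your implicit equation is not even of the clean form $\lambda=\mu\tanh\mu$). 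Your independent bounds are each valid, so you do obtain \emph{a} sufficient condition for stability, but it is a different (and in general more restrictive) condition than \eqref{lowcond}; the proposition as stated is not proved. A side remark: your worry that ``bounding the denominator above makes the quotient larger'' is backwards---it makes the quotient smaller, which is exactly what a lower bound needs---and the eigenvalue appearing in the denominator of \eqref{leminf} causes no difficulty for comparing infima of Rayleigh quotients, since $\lambda$ does not appear in the quotient itself.

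The missing idea is a preliminary normalization: multiply the numerator and denominator of $\bar\lambda$ by $\sqrt{\rho^2+\theta^2}$. Then the $g'^2$ coefficient becomes $\sqrt{(\rho^2+\theta^2y^2)(\rho^2+\theta^2)}\geq\rho^2$, the boundary coefficient becomes exactly $\rho^2$, and after bounding $2\rho^2\theta^2/(\rho^2+\theta^2y^2)\leq2\theta^2$ the \emph{same} weight $w(y)=\sqrt{(\rho^2+\theta^2)/(\rho^2+\theta^2y^2)}\in[1,\sqrt{\rho^2+\theta^2}/\rho]$ multiplies both the $\pi^2g^2$ term in the numerator and the $2\theta^2g^2$ term in the denominator. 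This is where Lemma \ref{lemeq2} genuinely enters: with $C=\int_{-1}^1 w\,g^2\,\mathrm{d}y$ as the common quantity, the quotient is bounded below by the minimum of the two endpoint problems $w\equiv1$ and $w\equiv\sqrt{\rho^2+\theta^2}/\rho$. The $w\equiv1$ problem has $A=C=\rho^2$, $B=\pi^2$, $D=2\theta^2$, for which \eqref{gen2} collapses exactly to $\lambda_1=\sqrt{(\pi^2-2\lambda_1\theta^2)/\rho^2}\tanh\sqrt{(\pi^2-2\lambda_1\theta^2)/\rho^2}$, and one checks $\lambda_2\geq\lambda_1$ for the other endpoint; setting $\lambda_1=1$ then gives precisely \eqref{lowcond}. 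Your endpoint-independent bounding discards this common-factor structure, which is why the constants refuse to fuse into $(\pi^2-2\theta^2)/\rho^2$.
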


\begin{proof}
Start by fixing $\rho\in(0,\infty)$ and $\theta\in[0,\infty)$ and notice that for all $g\in\cK$,
\begin{align}
\bar\lambda(g,\rho,\theta)&=\frac{\int_{-1}^1[\sqrt{(\rho^2+\theta^2y^2)(\rho^2+\theta^2)}g'(y)^2+\pi^2\sqrt{\frac{\rho^2+\theta^2}{\rho^2+\theta^2y^2}}g(y)^2]\, \text{d}y}{\rho^2(g(1)^2+g(-1)^2)+\int_{-1}^{1}\frac{2\rho^2\theta^2}{\rho^2+\theta^2y^2}\sqrt{\frac{\rho^2+\theta^2}{\rho^2+\theta^2y^2}}g(y)^2\,\text{d}y}\\
\label{lowbd}&\geq \frac{\int_{-1}^1[\rho^2g'(y)^2+\pi^2\sqrt{\frac{\rho^2+\theta^2}{\rho^2+\theta^2y^2}}g(y)^2]\, \text{d}y}{\rho^2(g(1)^2+g(-1)^2)+\int_{-1}^{1}2\theta^2\sqrt{\frac{\rho^2+\theta^2}{\rho^2+\theta^2y^2}}g(y)^2\,\text{d}y}
\end{align}
and
\beqn
\int_{-1}^1g(y)^2\,\text{d}y\leq \int_{-1}^1\sqrt{\frac{\rho^2+\theta^2}{\rho^2+\theta^2y^2}}g(y)^2\,\text{d}y\leq\int_{-1}^1\frac{1}{\rho}\sqrt{\rho^2+\theta^2}g(y)^2\,\text{d}y.
\eeqn

Taking the infimum of both sides of \eqref{lowbd} over $g\in\cK$ and using Lemma \ref{lemeq} and Lemma \ref{lemeq2} yields
\beqn\label{hlambdalb}
\hat\lambda(\rho,\theta)\geq\min\{\lambda_1,\lambda_2\},
\eeqn
where $\lambda_1$ must satisfy
\beqn
\label{low2}\lambda_1=\sqrt{\frac{\pi^2-2\lambda_1\theta^2}{\rho^2}}\tanh\sqrt{\frac{\pi^2-2\lambda_1\theta^2}{\rho^2}}
\eeqn
and $\lambda_2$ must satisfy
\beqn
\label{low2p}\lambda_2=\frac{1}{\rho^{3/2}}\sqrt{\pi^2\sqrt{\rho^2+\theta^2}-2\lambda_2\theta^2\sqrt{\rho^2+\theta^2}}\tanh\sqrt{\frac{\pi^2\sqrt{\rho^2+\theta^2}-2\lambda_2\theta\sqrt{\rho^2+\theta^2}}{\rho^3}}.
\eeqn
Using the relation $\sqrt{\rho^2+\theta^2}\geq\rho$, it follows from \eqref{low2} and \eqref{low2p} that $\lambda_2\geq\lambda_1$. Thus, from \eqref{hlambdalb} and Proposition \ref{prop5.1}, the surface $\cS_\theta$ is stable if $\lambda_1>1$. 

For the particular choice $\lambda_1=1$, \eqref{low2} specializes to
\beqn
\label{low2.1}1=\sqrt{\frac{\pi^2-2\theta^2}{\rho^2}}\tanh\sqrt{\frac{\pi^2-2\theta^2}{\rho^2}}.
\eeqn
It follows that the condition $\lambda_1>1$ holds if either 
\beqn
\label{low2.1.1}1>\sqrt{\frac{\pi^2-2\theta^2}{\rho^2}}\tanh\sqrt{\frac{\pi^2-2\theta^2}{\rho^2}}
\eeqn
or
\beqn
\label{low2.1.2}1<\sqrt{\frac{\pi^2-2\theta^2}{\rho^2}}\tanh\sqrt{\frac{\pi^2-2\theta^2}{\rho^2}}.
\eeqn
To determine which of these last two inequalities ensures that $\lambda_1>1$, start by considering \eqref{low2} with $\theta=0$ and $\rho=1$ to obtain
\beqn
\lambda_1=\pi\tanh \pi>1.
\eeqn
Using the same values of $\rho$ and $\theta$ in the right-hand side of \eqref{low2.1} yields a value strictly greater than $1$ and, hence, \eqref{low2.1.2}, which is equivalent to \eqref{lowcond}, gives $\lambda_1>1$.
\end{proof}

To obtain a condition for instability, first notice that for any $\bar g\in \cK$,
\beqn
\hat\lambda(\rho,\theta)\leq \bar\lambda(\bar g,\rho,\theta)\rfa (\rho,\theta)\in(0,\infty)\times[0,\infty).
\eeqn
Thus, if $\bar\lambda(\bar g,\rho,\theta)<1$, by Proposition \ref{prop5.1}, the helicoid $\cS_\theta$ would be unstable. Consider the function
\beqn\label{barg}
\bar g(y):=A\cosh\big(\frac{\pi}{\sqrt{\rho^2+\theta^2y^2}}\big)\frac{\rho}{\sqrt{\rho^2+\theta^2y^2}}\rfa y\in[-1,1],
\eeqn
where $A$ is chosen so that the $W^{2,2}$ norm of $\bar g$ is $1$ and, hence, $\bar g\in\cK$. There are two reasons for choosing this form for $\bar g$. First, for $\theta=0$, $\bar g$ reduces to $g_1$ in \eqref{g1}, which is the minimizer for $\hat\lambda$ when $\theta=0$. Second, the quantity $\sqrt{\rho^2+\theta^2y^2}$ which appears in \eqref{barg}, also appears in several places in \eqref{barlambda}.

\section{Discussion}

The inequalities 
\beqn\label{upcond2}
1<\sqrt{\frac{\pi^2-2\theta^2}{\rho^2}}\tanh\sqrt{\frac{\pi^2-2\theta^2}{\rho^2}}
\eeqn
and
\beqn\label{lowcond2}
1>\bar\lambda(\bar g,\rho,\theta),
\eeqn
which respectively supply conditions for stability and instability, can be plotted. In Figure 3, the blue region consists of those points satisfying the stability condition \eqref{upcond2} and the red region consists of those points satisfying the instability condition \eqref{lowcond2}. The results suggest that there is a curve in the $(\rho,\theta)$-plane that separates the stable helicoids from the unstable helicoids and that this curve emanates from the vertical axis, decreasing for increasing $\rho$ until it intersects the $\rho\hspace{.01in}$-axis at the value of $\rho$ that satisfies $\frac{\pi}{\rho}\tanh(\frac{\pi}{\rho}\big)=1$. This curve must necessarily fall between the red and blue regions in Figure 3. However, it is important to realize that the results of the previous section do not establish the existence of such a curve.  It is possible that the stable and unstable regions are separated by a nonmonotonically decreasing curve that intersects the $\rho\hspace{.01in}$-axis. Moreover, the results do not even guarantee that the stable and unstable regions are separated by a curve.

The points in Figure 3 marked with circles and squares correspond to marginally unstable helical surfaces obtained experimentally and numerically by Cox \& Jones \cite{CJ}. Each such helicoid becomes unstable if the ratio $\rho$ is increased. While the points marked with circles represent experimental data with measurement errors of $\pm 0.15$ in $\rho$ and $\pm0.17$ in $\theta$, those marked with squares represent numerical solutions obtained using Surface Evolver with errors of $\pm0.01$ in $\rho$ and $\pm0.00017$ in $\theta$. Taking into account these errors, the results of the previous section are consistent with both the experimental and numerical results of Cox \& Jones \cite{CJ}. Moreover, it appears that the inequality \eqref{lowcond2} determining the unstable region is close to optimal, while the inequality \eqref{upcond2} determining the stable region leaves room for improvement.

\begin{figure}[t]\label{figstplot}
\centering
\begin{picture}(450,230)
\put (110,0)  {\includegraphics[width=3in]{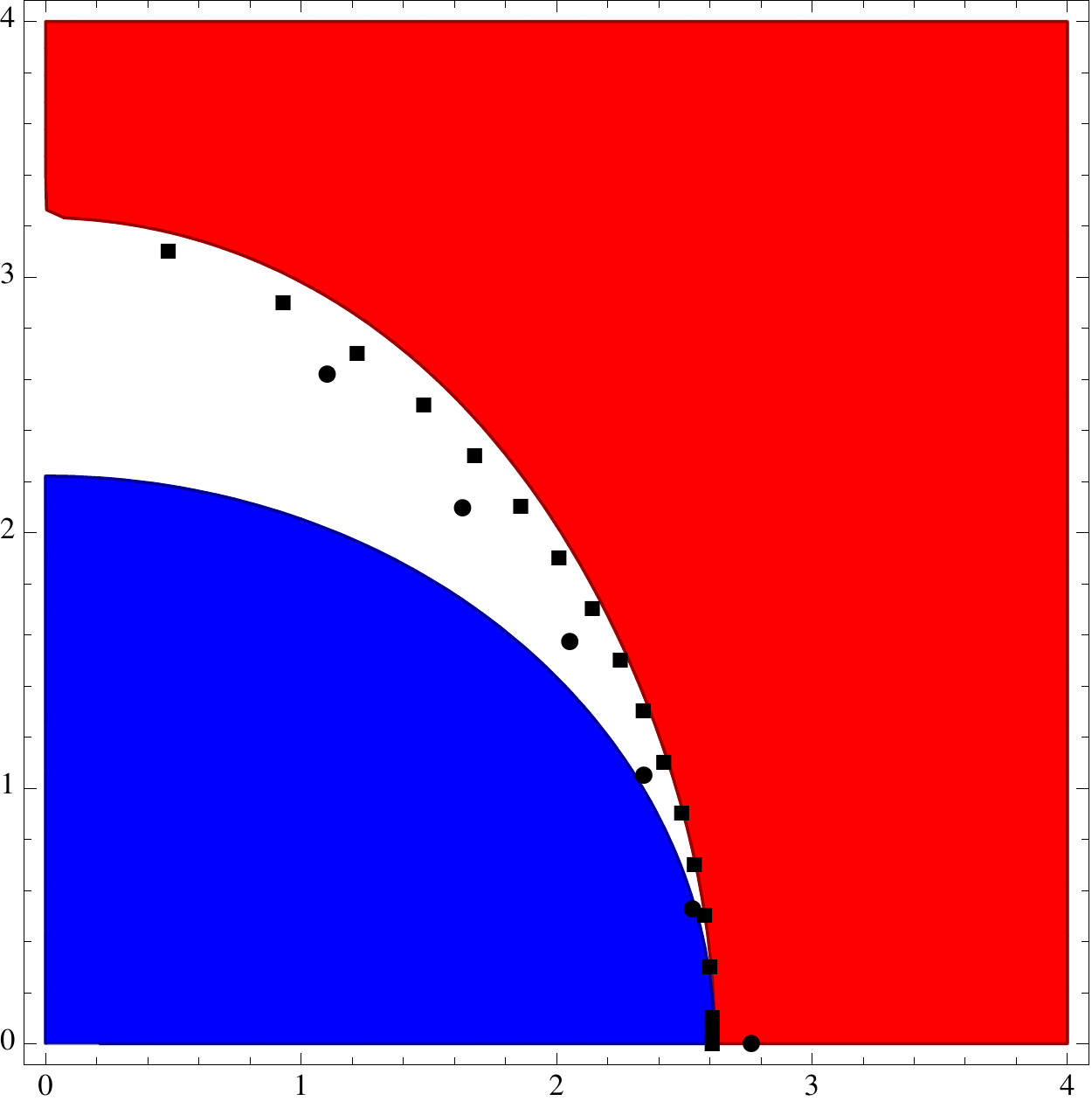}}
\end{picture}
\put(-350,112){$\theta$}
\put(-233,-10){$\rho$}
\caption{A depiction of the stability results for the helical surface $\cS_\theta$. While the points in the red region satisfy \eqref{upcond2} and correspond to unstable surfaces, the points in the blue region satisfy \eqref{lowcond} and correspond to stable surfaces. The red and blue regions meet on the horizontal axis at the value of $\rho$ that satisfies $\frac{\pi}{\rho}\tanh(\frac{\pi}{\rho}\big)=1$. The analysis in this section is inconclusive regarding the surfaces corresponding to the points in the white region. The points in the white region correspond to marginally stable helical surfaces obtained Cox \& Jones~\cite{CJ}; while the circles represent data from experiments with measurement errors of $\pm0.15$ in $\rho$ and $\pm0.17$ in $\theta$, the squares represent numerical solutions obtained using Surface Evolver with errors of $\pm0.01$ in $\rho$ and $\pm0.00017$ in $\theta$.}
\end{figure}

Although the precise boundary between the stable and unstable regions remains undetermined, a number of interesting facts can be deduced from Figure 3. In particular, since $\rho=L/R$, large values of $\rho$ correspond to long, thin cylinders and small values of $\rho$ correspond to short, thick cylinders, it is evident that
\begin{enumerate}

\item For sufficiently long, thin cylinders ($2.62\lessapprox \rho$), no helicoid is stable.

\item Regardless of the dimensions of the cylinder, no helicoid that contains a little over half of a rotation is stable.

\item As long as the cylinder is not overly long and thin ($\rho\lessapprox 2.62$) and $\theta_0$ is not too large ($\theta_0\lessapprox2.2$), there exist nonflat stable helicoids.

\end{enumerate}

\subsection*{Acknowledgement} 

We thank Gantumur Tsogtgerel, David Shirokoff, and Marco Veneroni for very fruitful discussions, Aisa Bira for assistance with Figures 1 and 2, and Russell Todres for helpful input.

\end{document}